\documentclass[10pt]{amsart}

\usepackage{amsmath, amsfonts, amsthm, amssymb, graphicx, fullpage, enumerate}

\newtheorem{theorem}{Theorem}      
\newtheorem{lemma}{Lemma}
\newtheorem{corollary}[lemma]{Corollary}

\newtheorem{proposition}[lemma]{Proposition}     

\newtheorem*{main theorem}{Main Theorem}  
\newtheorem*{thmA}{Theorem A}  
\newtheorem*{thmB}{Theorem B}    
\newtheorem*{thmC}{Theorem C}   
\newtheorem*{thmD}{Theorem D}  
\newtheorem*{thmE}{Theorem E}    
\newtheorem*{thmF}{Theorem F}    
\newtheorem*{thmG}{Theorem G}     
\theoremstyle{remark} 
\newtheorem*{rem}{Remark} 
\theoremstyle{definition}  
\newtheorem{definition}{Definition}  
     
\def\T{\mathbb{T}}         
\def\N{\mathbb{N}}     
          
\def\R{\mathcal{R}}     
\def\Z{\mathbb{Z}}     
\def\C{\mathbb{C}}  
\def\P{\mathcal{P}}



\begin{document}
\title{S\'ark\"ozy's Theorem for $\P$-intersective Polynomials}
\author{Alex Rice }
\address{Department of Mathematics, The University of Georgia, Athens, GA 30602, USA}
\email{arice@math.uga.edu} 
\subjclass[2000]{11B30}
\begin{abstract} We define a necessary and sufficient condition on a polynomial $h\in \Z[x]$ to guarantee that every set of natural numbers of positive upper density contains a nonzero difference of the form $h(p)$ for some prime $p$. Moreover, we establish a quantitative estimate on the size of the largest subset of $\{1,2,\dots,N\}$ which lacks the desired arithmetic structure, showing that if $\deg(h)=k$, then the density of such a set is at most a constant times $(\log N)^{-c}$ for any $c<1/(2k-2)$.  We also discuss how an improved version of this result for $k=2$ and a relative version in the primes can be obtained with some additional known methods.
\end{abstract}
\maketitle

\setlength{\parskip}{5pt} 
\section{Introduction}
\subsection{Background} A set $A\subseteq \N$ is said to have \textit{positive upper density} if
\begin{equation*} \limsup_{N \to \infty} \frac{|A\cap[1,N]|}{N}>0, \end{equation*} where $[1,N]$ denotes $\{1,2,\dots,N\}$. In the late 1970s, S\'ark\"ozy and Furstenberg independently confirmed a conjecture of Lov\'asz that any set of natural numbers of positive upper density necessarily contains two elements which differ by a perfect square. Furstenberg \cite{Furst} used ergodic theory and obtained a purely qualitative result, proving the conjecture exactly as stated above. S\'ark\"ozy, however, employed a Fourier analytic density increment strategy, inspired by Roth's proof of the analogous conjecture for three-term arithmetic progressions \cite{Roth}, to prove the following quantitative strengthening. 

\begin{thmA}[S\'ark\"ozy, \cite{Sark1}]  If $A\subseteq [1,N]$ and $n^2 \notin A-A$ for all $n\in \N$, then\begin{equation*} \label{sarkb} \frac{|A|}{N} \ll \Big(\frac{(\log \log N)^2}{\log N}\Big)^{1/3}. \end{equation*}
\end{thmA}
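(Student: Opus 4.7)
The plan is a Fourier-analytic density increment argument modeled on Roth's proof for three-term progressions. Suppose $A\subseteq[1,N]$ has density $\delta=|A|/N$ and contains no nonzero square difference. I would first pass to $A\cap[1,N/2]$ (losing at most a factor of $2$ in density) and embed in $\Z/N\Z$ to avoid wrap-around, then study the counting form
\begin{equation*}
R(A)=\sum_{n=1}^{L}\sum_{x} 1_A(x)\,1_A(x+n^2),\qquad L=\lfloor\sqrt{N/2}\rfloor.
\end{equation*}
By hypothesis $R(A)=0$, while Fourier inversion expresses $R(A)$ as a weighted sum of $|\widehat{1_A}(\xi)|^2$ against the quadratic exponential sum $T(\xi/N)=\sum_{n\le L}e(\xi n^2/N)$. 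Isolating $\xi=0$ produces a main term of size $\sim\delta^2 NL$, which the nonzero frequencies must therefore cancel.

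The next step is the standard major-arc/minor-arc decomposition of the remaining frequencies. On minor arcs — frequencies $\alpha$ poorly approximable by rationals with small denominator — Weyl's inequality yields $|T(\alpha)|=o(L)$, and combining with Parseval bounds their total contribution strictly below the main term. Hence there must exist a nonzero frequency $\xi$ such that $\xi/N$ lies close to some rational $a/q$ with $q$ of at most polylogarithmic size, and with $|\widehat{1_A}(\xi)|\gg\delta\sqrt{N}$.

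From such a large Fourier coefficient concentrated near a rational with small denominator, the character $e(\xi x/N)$ is nearly constant on each block of a partition of $[1,N]$ into arithmetic progressions of common difference $q^2$ and length $M$, for a suitably chosen $M$. A standard pigeonhole/averaging step then produces one such progression on which the relative density of $A$ is boosted from $\delta$ to $\delta(1+c)$ for an absolute constant $c>0$. Rescaling that progression to an interval of length $\sim M$ and iterating, the density cannot remain below $1$ for more than $O(1/\delta)$ steps (or $O(\log(1/\delta))$ once the gain compounds multiplicatively), forcing a contradiction once the underlying interval is too short to host an even weaker version of the hypothesis.

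The main obstacle is quantitative bookkeeping: each iteration shrinks $N$ by a factor depending on $q$ and the width of the relevant major arc, while Weyl's bound only constrains $q$ within a polylogarithmic range. Carefully optimizing the minor-arc threshold, the parameter $L$, and the progression length $M$ at each stage — and tracking the compounded loss across the iteration — is precisely what yields the exponent $1/3$ and the double-logarithmic factor in the bound $\delta\ll\bigl((\log\log N)^2/\log N\bigr)^{1/3}$. The polynomial case treated later in the paper should require handling general Weyl sums $\sum_{n\le L} e(\alpha h(n))$ in place of the purely quadratic one, which is the chief additional technical hurdle.
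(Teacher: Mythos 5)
The paper does not actually prove Theorem A; it is quoted as background from S\'ark\"ozy's original paper \cite{Sark1}, so there is no in-paper proof to compare your outline against. Judged on its own, your sketch correctly identifies the broad architecture of the Fourier-analytic density increment argument: count square differences via a Fourier-weighted sum, isolate the trivial frequency to get a main term of size $\sim\delta^2 NL$, control the minor arcs with Weyl's inequality and Plancherel, locate a large Fourier coefficient near a rational $a/q$ with small $q$, and pass to a denser subprogression.

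Two places need to be made precise. First, the common difference of the subprogression must be $q^2$ for a reason your write-up does not surface: it is not merely to make $e^{2\pi i \xi x/N}$ near-constant on blocks (that already holds for difference $q$), but to ensure the rescaled set inherits the square-difference-free hypothesis. Concretely, if $A' = \{\ell : x_0 + \ell q^2 \in A\}$ and $\ell - \ell' = n^2$, then $(x_0+\ell q^2)-(x_0+\ell' q^2) = (qn)^2 \in A - A$, contradicting the hypothesis on $A$. This structural inheritance is the engine that makes the iteration close, and it is exactly what the paper formalizes for general polynomials in Proposition \ref{scale} through the auxiliary polynomials $h_d$ and the step size $\lambda(q)$. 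Second, the density increment one can actually extract is \emph{not} $\delta \mapsto \delta(1+c)$ with $c>0$ absolute; if it were, the iteration would terminate in $O(\log(1/\delta))$ steps and yield a bound far stronger than what the theorem asserts. The increment is of the form $\delta \mapsto \delta + c\delta^\gamma$ for some $\gamma>1$ governed by the strength of the major-arc Gauss sum bound $|T(a/q)| \ll Lq^{-1/2}$ and the $L^2$ concentration one can derive from it, so the iteration runs roughly $\delta^{-(\gamma-1)}$ times. Balancing that iteration count against the polynomial-in-$1/\delta$ shrinkage of $N$ at each step is precisely where the exponent $1/3$ and the $(\log\log N)^2$ factor come from; deferring this to ``careful bookkeeping'' skips the part of the argument where the quantitative content of the theorem actually lives.
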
  
  
In this and the following theorems,  $A-A$ denotes the difference set $\{a-a':a,a' \in A\}$, the symbol $\ll$ denotes ``less than a constant times'', and we implicitly assume that $N$ is large enough to make the right hand side of the inequalities defined and positive.  An extensive literature has been developed on improvements and extensions of Theorem A, for which the reader may refer to \cite{PSS}, \cite{BPPS}, \cite{Slip}, \cite{Lucier}, \cite{LM}, \cite{Le}, and \cite{HLR}. In the same series of papers, S\'ark\"ozy answered a similar question of Erd\H{o}s concerning shifted primes.
\begin{thmB}[S\'ark\"ozy, \cite{Sark3}] \label{sarkP}  If $A\subseteq [1,N]$ and $p-1 \notin A-A$ for all primes $p$, then\begin{equation}\label{sarpb} \frac{|A|}{N} \ll \frac{(\log\log\log N)^3\log\log\log\log N}{(\log \log N)^2}. \end{equation}
\end{thmB}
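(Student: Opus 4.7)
The plan is to implement S\'ark\"ozy's Fourier analytic density-increment scheme, with Vinogradov's estimate for exponential sums over primes playing the role that Weyl's inequality plays in the proof of Theorem A. Suppose $A\subseteq[1,N]$ has density $\delta=|A|/N$ and that $A-A$ omits $p-1$ for every prime $p$. Writing $\widehat{1_A}(\alpha)=\sum_{a\in A}e(\alpha a)$ and $f(\alpha)=\sum_{p\leq N}(\log p)\,e(\alpha(p-1))$, Fourier inversion yields
\[
\int_0^1|\widehat{1_A}(\alpha)|^2 f(\alpha)\,d\alpha=\sum_{p\leq N}(\log p)\,r_A(p-1),
\]
where $r_A(m)=\#\{(a,a')\in A\times A:a-a'=m\}$. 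The hypothesis forces this integral to vanish.

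\textbf{Circle method.} I would choose $Q=(\log N)^B$ for a sufficiently large exponent $B$ and decompose $[0,1]$ into major arcs $\mathfrak M$ (unions of intervals $|\alpha-a/q|\leq Q/N$ over reduced fractions with $q\leq Q$) and minor arcs $\mathfrak m$. On $\mathfrak m$ the Vinogradov--Vaughan bound gives $|f(\alpha)|\ll N(\log N)^{-C}$ for arbitrarily large fixed $C$; combined with Plancherel this bounds the minor-arc contribution by $\delta N^2(\log N)^{-C}$. On a major arc about $a/q$, the standard local expansion produces $f(a/q+\beta)\approx (\mu(q)/\phi(q))\,e(-a/q)\,T(\beta)$ with $T(\beta)=\sum_{n\leq N}e(n\beta)$. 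Isolating $q=1$ yields the heuristic main term of size $\asymp \delta^2 N^2$, while the $q\geq 2$ pieces are controlled by the sizes $|\widehat{1_A}(a/q+\beta)|$ with $|\beta|\ll Q/N$.

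\textbf{Density increment.} Assuming $\delta$ exceeds the target bound, the main term overwhelms the minor-arc contribution; since the whole integral vanishes, the higher major arc pieces must cancel it, and this forces some reduced fraction $a/q$ with $2\leq q\leq Q$ and some $|\beta|\ll Q/N$ at which $|\widehat{1_A}(a/q+\beta)|$ is large in comparison to $\delta|A|$. A standard pigeonhole over residues mod $q$ (together with an interval-length window coming from $\beta$) then produces an arithmetic progression $P\subseteq[1,N]$ of length at least $N/(\log N)^{O(1)}$ and common difference a divisor of $q$ on which $|A\cap P|/|P|\geq\delta(1+c\delta)$.

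\textbf{Iteration and the quantitative bound.} Rescaling $A\cap P$ to a subset of $[1,N_1]$ and iterating gives sequences $(\delta_j,N_j)$ with $\delta_{j+1}\geq\delta_j+c\delta_j^2$ and $\log N_{j+1}\geq\log N_j-O(\log\log N_j)$. Since $1/\delta_j$ decreases by a fixed constant at each step, the procedure can run for at most $O(1/\delta)$ iterations, after which the hypothesis on $A$ must fail. Throughout, $N_j$ must remain large enough for the Vinogradov minor-arc estimate to stay in force; balancing the polylog shrinkage in $N_j$ against the allowed number of iterations, and carefully tracking the loss coming from the effective dependence of the constants in Vinogradov's estimate on the level of distribution, yields the advertised bound. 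The principal obstacle is this quantitative bookkeeping: one must keep the major/minor decomposition, the density increment, and the Vinogradov estimate uniformly effective across all scales $N_j$ visited during the iteration, and it is precisely the cost of this simultaneity which is responsible for the iterated-logarithm shape of the final estimate $(\log\log\log N)^3\log\log\log\log N/(\log\log N)^2$.
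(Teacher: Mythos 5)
The paper does not prove Theorem~B; it is quoted from S\'ark\"ozy \cite{Sark3} as background, so there is no in-paper proof to compare against. Measured against S\'ark\"ozy's original argument, your outline is the right skeleton: circle method, Vinogradov--Vaughan on the minor arcs, the singular-series expansion $f(a/q+\beta)\approx(\mu(q)/\phi(q))T(\beta)$ on the major arcs, and a density increment with iteration.

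The quantitative commitments you make, however, would not produce the advertised bound, which signals that the real difficulty is being waved away. If each step really gave $\delta_{j+1}\geq\delta_j+c\delta_j^2$ (hence $O(1/\delta)$ iterations) while $\log N_{j+1}\geq\log N_j-O(\log\log N_j)$, the iteration would stall only once $\log\log N/\delta\gtrsim\log N$, yielding $\delta\ll\log\log N/\log N$ --- far stronger than the target $(\log\log N)^{-2}$ shape, and thus inconsistent with the theorem you are proving. The true bound is much weaker for two concrete reasons your sketch does not confront. First, the major-arc local factors $\mu(q)^2/\phi(q)$ have divergent sum $\sim\log Q$, so the energy argument localizes $|\widehat{1_A}|^2$ at a single $q\leq Q$ only up to a $\log Q\sim\log\log N$ loss, which already degrades the increment. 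Second, and decisively, passing to a progression of common difference $q$ turns the problem into one about primes $p\equiv 1\pmod q$; the modulus accumulates multiplicatively across the iteration and must at every scale $N_j$ remain inside the Siegel--Walfisz range $(\log N_j)^{O(1)}$ for the major-arc asymptotics to survive. This accumulated-modulus constraint --- not the shrinkage of $N_j$ per se --- is what throttles the iteration and produces the $(\log\log N)^{-2}$ shape, and it is precisely the obstruction that Lemma~\ref{RS} of the present paper (following Ruzsa--Sanders) is engineered to relax. Your phrase about the ``effective dependence of the constants in Vinogradov's estimate on the level of distribution'' gestures at this, but the increment and shrinkage rates you assert are not compatible with it.
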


\noindent The bounds in Theorem B have been improved, first by Lucier \cite{Lucier2} and later by Ruzsa and Sanders \cite{Ruz}, who replaced (\ref{sarpb}) with $|A|/N \ll e^{-c(\log N)^{1/4}}$.  

A natural generalization of Theorem A is the replacement of the squares with the image of a more general polynomial. However, to hope for such a result for a given  polynomial $h \in \Z[x]$, it is clearly necessary that $h$ has a root modulo $q$ for every $q \in \N$, as otherwise there would be a set $q\N$ of positive density with no differences in the image of $h$. It follows from a theorem of Kamae and Mend\`es France \cite{KMF} that this condition is also sufficient, in a qualitative sense, and in this case we say that $h$ is an \textit{intersective polynomial}. \\

Examples of intersective polynomials include any polynomial with an integer root and any polynomial with two rational roots with coprime denominators. However, there are also intersective polynomials with no rational roots, for example $(x^3-19)(x^2+x+1)$. The best current bounds for this most general setting are essentially due to Lucier, who successfully adapted the density increment procedure by utilizing $p$-adic roots and allowing the polynomial to change at each step of the iteration.

\begin{thmC}[Lucier, \cite{Lucier}] Suppose $h\in \Z[x]$ is an intersective polynomial of degree $k\geq 2$ with positive leading term. If $A \subseteq [1,N]$ and $h(n) \notin A-A$ for all $n \in \N$ with $h(n)> 0$, then \begin{equation*} \frac{|A|}{N} \ll \Big(\frac{(\log\log N)^\mu}{\log N}\Big)^{1/(k-1)}, \quad \mu=\begin{cases}3 &\text{if }k=2 \\ 2 &\text{if }k>2 \end{cases},\end{equation*} where the implied constant depends only on $h$.
\end{thmC}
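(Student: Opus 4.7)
The plan is to execute the Fourier analytic density increment argument pioneered by Sárközy and Roth, but adapted to general intersective $h$ using $p$-adic roots as in Lucier's earlier work. First I would set up a counting function $D(A)=\sum_{n: 0<h(n)\le N} 1_A \ast 1_{-A}(h(n))$; assuming $D(A)=0$, the goal is to force $|A|/N$ to be small. Fourier inversion expresses $D(A)$ as $\int_0^1 |\widehat{1_A}(\theta)|^2 S(\theta)\,d\theta$, where $S(\theta)=\sum_n e(h(n)\theta)$ is an exponential sum with $\asymp N^{1/k}$ terms.

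Next I would execute the standard circle method split. The major arcs near rationals $a/q$ with small denominator produce, via Gauss-sum / completion estimates, the expected main term of size $\alpha^2 N^{1+1/k}$ where $\alpha=|A|/N$. On the minor arcs I would apply Weyl's inequality to obtain $|S(\theta)|\ll N^{1/k-\delta_k}$ with $\delta_k>0$ depending only on $k$ (this is precisely where the weaker savings for $k=2$ produce the exponent $\mu=3$ rather than $\mu=2$). Combining the two bounds with $D(A)=0$ forces the existence of a rational $a/q$, with $q$ not too large, at which $|\widehat{1_A}(a/q+\beta)|$ is large for some small $\beta$. A standard argument then converts this into a density increment for $A$ on an arithmetic progression of common difference some $q'\mid q$ and length $N'\approx N/q^k$.

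The key iterative step, and the place where intersectivity of $h$ is used, is that after restricting $A$ to the progression $r+q'\Z$ and rescaling, the forbidden difference set must again be the image of an intersective polynomial of the same degree. Writing the new polynomial as $h_1(m)=(h(q'm+r)-h(r_0))/d$ for a carefully chosen shift $r$ and divisor $d$, one requires that $h_1$ is itself intersective so that the argument can be repeated. Following Lucier, this is arranged by fixing once and for all a $p$-adic integer root $z_p\in\Z_p$ of $h$ for every prime $p$, and at each stage choosing the shift $r$ to agree with the $z_p$ for every $p\mid q'$; the Chinese remainder theorem and Hensel's lemma then guarantee such an $r$ exists and that $h_1\in\Z[x]$ is intersective with leading coefficient of controlled size. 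The degree is preserved throughout, and the density at least doubles every $O(\alpha^{-(k-1)})$ steps.

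The main obstacle is the bookkeeping in the iteration: one must simultaneously track the modulus $q_i$, the rescaling factor $d_i$, the new polynomial $h_i$ (whose coefficients absorb the shifts coming from the $p$-adic roots), and the shrinking interval length $N_i$, while ensuring that the Weyl estimate and the major arc approximation remain valid at each step and that the implied constants depend only on $h$. Balancing the density doubling ($\alpha_{i+1}\ge\alpha_i+c\alpha_i^2$ in the relevant regime) against the requirement $N_i$ stays large enough to run Weyl forces the iteration to terminate in $O(1/\alpha^{k-1})$ steps and yields $\alpha\ll ((\log\log N)^\mu/\log N)^{1/(k-1)}$ with $\mu$ as in the theorem.
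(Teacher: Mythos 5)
Your outline reproduces the density-increment-with-auxiliary-polynomials strategy that Lucier used to prove Theorem C; note that this paper does not reprove Theorem C but cites \cite{Lucier}, and then adapts the same machinery to the $\P$-intersective/prime setting in Sections 2--5, so your sketch aligns with the method the paper is built on. That said, a few slips are worth flagging. The rescaled polynomial is $h_d(x)=h(r_d+dx)/\lambda(d)$, where $r_d$ is the CRT-determined shift built from fixed $p$-adic roots $z_p$ and $\lambda$ is the \emph{completely multiplicative} function with $\lambda(p)=p^m$, $m$ the multiplicity of $z_p$ as a root of $h$; dividing by $d$ itself, or forming $(h(q'm+r)-h(r_0))/d$, does not in general give an integral polynomial, and the multiplicity construction is precisely what guarantees integrality (Lemma 21 of \cite{Lucier}, quoted here as the discussion after the definition of $h_d$). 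Relatedly, the per-step density increment is of order $\sigma^k$ (up to an $\epsilon$), not $\sigma^2$: the exponent $k$ comes from the Gauss sum bound $G(a,q)\ll q^{1-1/k}$, and is exactly what produces $O(\alpha^{-(k-1)})$ total iterations; your two claims are consistent only at $k=2$. Finally, the extra power of $\log\log N$ at $k=2$ in Lucier's statement is not due to weaker minor-arc savings (Weyl is strongest at $k=2$); it is a separate technical inefficiency, and the author notes in the paper that one can take $\mu=1$ for all $k$.
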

 
In \cite{thesis}, the author made an extremely mild improvement to Theorem C, showing that one can in fact take $\mu=1$. By the symmetry of difference sets, Theorem C and all the following theorems clearly imply the analogous results for the negative values of a polynomial with negative leading term. Recently, Hamel, Lyall, and the author utilized Lucier's techniques in extending the best known bound on the size of a set with no square differences, due originally to Pintz, Steiger, and Szemer\'edi \cite{PSS} and extended to $k^{\text{th}}$ powers by Balog, Pelikan, Pintz, and Szemer\'edi \cite{BPPS}, to all intersective polynomials of degree $2$, which is to say quadratic polynomials which have two rational roots with coprime denominators.

\begin{thmD}[Hamel, Lyall, Rice, \cite{HLR}] Suppose $h\in \Z[x]$ is an  intersective quadratic polynomial with positive leading term. If $A\subseteq [1,N]$ and $h(n) \notin A-A$ for all $n\in \N$ with $h(n)>0$, then 
\begin{equation*}\frac{|A|}{N} \ll (\log N)^{-c \log\log\log\log N}
\end{equation*}for any $c<1/\log 3$, where the implied constant depends only on $h$ and $c$. 
\end{thmD}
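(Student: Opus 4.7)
The approach is a Fourier analytic density increment, refined by a double iteration in the spirit of Pintz--Steiger--Szemer\'edi (PSS), executed along Lucier's intersective tower so as to handle intersective quadratics with no integer root.

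\textbf{Setup and single-step increment.} Since $h$ is intersective quadratic with positive leading term, we may write $h(x)=\alpha(sx-r)(s'x-r')/(ss')$ with $(s,s')=1$. Following Lucier, we construct an intersective tower $(z_j,\lambda_j)_{j\geq 0}$ with $\lambda_j\mid\lambda_{j+1}$, $\lambda_j\mid h(z_j)$, and $z_{j+1}\equiv z_j\pmod{\lambda_j}$, so that at stage $j$ the problem transfers to a dilated set $A_j\subseteq[1,N_j]$ of density $\delta_j\geq\delta$ that must avoid nonzero differences of the auxiliary polynomial $h_j(x)=h(\lambda_jx+z_j)/\lambda_j\in\Z[x]$, where $N_j\sim N/\lambda_j$. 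Standard Fourier analysis on the count
\[
S_j=\sum_{n\leq M_j}1_{A_j}\ast 1_{-A_j}\bigl(h_j(n)\bigr),\qquad M_j\sim N_j^{1/2},
\]
together with Weyl bounds on the minor arcs, produces the familiar dichotomy: either the count attains its expected order $\delta_j^2M_j$, contradicting avoidance, or there exists a rational $a/q$ with $q\ll\delta_j^{-O(1)}$ at which $|\widehat{1_{A_j}}(a/q)|\gg\delta_j$. The latter yields a density increment of $A_j$ on a sub-AP of common difference $O(q^2)$, and greedy iteration recovers Theorem C.

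\textbf{PSS double iteration.} To improve to the bound of Theorem D we extract many Fourier coefficients per stage by working with the restricted large spectrum
\[
\mathrm{Spec}_\eta(A_j)=\Set{a/q:\,q\leq Q,\ |\widehat{1_{A_j}}(a/q)|\geq\eta\delta_j},
\]
for suitable $\eta,Q$. A covering lemma in the spirit of \cite{PSS} (for squares) and \cite{BPPS} (for $k$th powers) shows that if $\mathrm{Spec}_\eta(A_j)$ is large, then $A_j$ admits a density increment on an AP whose common difference divides the \emph{product} of the denominators in the spectrum, at a cost in $\log N_j$ far less than applying the single-peak increment that many times. Consequently a full doubling $\delta_j\mapsto 2\delta_j$ can be executed while consuming only a slowly growing portion of $\log N$, permitting $\log_2(1/\delta)$ doublings before the ambient interval becomes trivial, which forces the bound asserted in Theorem D. The exponent $c<1/\log 3$ arises from the branching constant in the covering lemma.

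\textbf{Main obstacle.} The principal difficulty, and the reason the statement is restricted to $k=2$, is harmonising the PSS large-spectrum combinatorics with Lucier's polynomial-changing recursion. Between stages $j$ and $j+1$ the polynomial, the interval, and the dilation parameter are all rescaled by factors related to $\lambda_{j+1}/\lambda_j$, and one must verify that large-spectrum elements harvested from several auxiliary polynomials $h_{j_1},\dots,h_{j_L}$ survive these rescalings and can be packaged into a single AP increment when fed into the covering lemma. For intersective quadratics the auxiliary $h_j$ are again quadratic with controlled leading coefficient and the $p$-adic bookkeeping coming from Lucier's tower remains tractable; for $k>2$ the faster growth of $\lambda_j$ and the richer structure of $h_j$ disrupt this compatibility, which is why Theorem D is presently confined to the quadratic case.
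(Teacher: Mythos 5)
The paper does not prove Theorem D; it is quoted from \cite{HLR}, and the Appendix only sketches how to adapt that proof to the $\P$-intersective setting. Your high-level outline --- Lucier's auxiliary-polynomial tower combined with a PSS-style double iteration that harvests many Fourier peaks per stage --- is the right overall shape and matches what the paper reveals about \cite{HLR}. But your diagnosis of \emph{why} the argument is confined to $k=2$ is wrong. You blame a combinatorial incompatibility between the covering lemma and the recursion $(\lambda_j,z_j,h_j)$, claiming that for $k>2$ the faster growth of $\lambda_j$ and richer structure of $h_j$ disrupt the bookkeeping. In fact Lucier's tower is equally tractable for every degree --- that is precisely Theorem C --- and Lemma \ref{content} shows $\textnormal{cont}(h_d)$ stays bounded independently of $d$ for any $k$, so the recursion poses no new obstruction. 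The genuine bottleneck, as the Appendix explicitly notes, is analytic: the proof of Theorem D relies on ``square root cancellation'' of the normalized weighted Weyl sum on the major arcs,
\begin{equation*}
W(t/N)\ll q^{-1/2}\log\log q\cdot\min\{1,(N|t/N-a/q|)^{-1}\},
\end{equation*}
a consequence of the complete Gauss sum bound $|G(a,q)|\ll q^{1-1/k}$ specialised to $k=2$; for $k>2$ the decay $q^{-1/k}$ is too weak for the $\ell^2$ manipulations over denominators that the large-spectrum iteration requires. Separately, your sketch omits the higher-moment input $N\int_0^1|W(\alpha)|^s\,\textnormal{d}\alpha\ll 1$ (with $s=2^k+6$ in the paper's notation) on which the spectrum-extraction step depends, and the claim that the constant $1/\log 3$ ``arises from the branching constant in the covering lemma'' is left unsubstantiated; in \cite{HLR} it is traced to the precise arithmetic of how the modulus grows per density doubling, a quantity your outline never tracks.
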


Some work has also been done to combine extensions of Theorem A with Theorem B. Li and Pan \cite{lipan} established the following quantitative result.

\begin{thmE}[Li, Pan, \cite{lipan}]Suppose $h \in \Z[x]$ has positive leading term and $h(1)=0$. If $A \subseteq [1,N]$ and $h(p) \notin A-A$ for all primes $p$ with $h(p)> 0$, then \begin{equation*} \frac{|A|}{N} \ll 1/\log\log\log N. \end{equation*}
\end{thmE}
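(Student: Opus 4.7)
The plan is to adapt the Fourier analytic density increment strategy of S\'ark\"ozy, merging the prime-exponential-sum techniques needed for Theorem B with Lucier's polynomial rescaling procedure used for Theorem C. Write $h(x)=(x-1)g(x)$ using the hypothesis $h(1)=0$, set $\delta = |A|/N$, and pick a scale $X$ with $h(X) \asymp \delta N$, so $X \asymp (\delta N)^{1/k}$. Define the weighted prime exponential sum
\[ S(\alpha) \;=\; \sum_{\substack{p \leq X \text{ prime}\\ h(p) > 0}} (\log p)\, e(h(p)\alpha), \]
and express the count of hypothetical differences as $T = \int_0^1 |\widehat{1_A}(\alpha)|^2 \overline{S(\alpha)}\, d\alpha$. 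The prime number theorem gives $S(0) \asymp X$, so the $\alpha = 0$ contribution is $\gg \delta^2 N X$. Since $T=0$ by hypothesis, this must be cancelled by the contribution of nontrivial frequencies, producing a large Fourier coefficient $|\widehat{1_A}(\alpha_0)|$ at some $\alpha_0 \neq 0$.

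Next I would split $[0,1]$ into major arcs about rationals $a/q$ with $q \leq Q = (\log X)^B$ and minor arcs, and run the circle method on $S(\alpha)$. The minor arc estimate $|S(\alpha)| \ll X(\log X)^{-A}$ is obtained via Vaughan's identity combined with a Weyl-type inequality for the degree-$k$ polynomial $h$, forcing $\alpha_0$ onto a major arc. There $S(\alpha)$ is well approximated by $q^{-1}\Phi_h(a,q)\, I(\alpha - a/q)$, where $\Phi_h(a,q) = \sum_{r \in (\Z/q\Z)^*} e((a/q) h(r))$ is a restricted Gauss-type sum. The hypothesis $h(1)=0$, together with Dirichlet's theorem, ensures this sum is nontrivial (giving the analogue of intersectivity along primes, which is what one would check corresponds to the necessary condition for the qualitative result).

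The large Fourier coefficient then produces, by the standard pigeonhole step, a progression $P$ of common difference a divisor of $q$ and length $\asymp N/q^{2k}$ on which $A$ has density at least $\delta(1+c)$. To restore the original problem structure on $P$, I would rescale via $n \mapsto qn + r$, choosing $r$ using $p$-adic roots of $h$ so that the shifted polynomial remains intersective along primes, essentially as in Lucier's iteration; this yields a new set $A'$ in $[1,N']$ with $N' \approx N/q^{2k}$ and density at least $\delta(1+c)$, avoiding differences in the image of a new admissible polynomial. Iterating $O(\log \delta^{-1})$ times until the density exceeds $1$ yields the final bound by tracking how rapidly $N'$ shrinks.

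The main obstacle is the minor arc bound for $S(\alpha)$ uniform enough to support this iteration, because it must simultaneously carry the Weyl loss for a degree-$k$ polynomial and the Vinogradov loss for the prime restriction. These two losses compound at every iteration step: the polynomial ingredient contributes one $\log$ loss (as in Theorem C), and the prime ingredient contributes another $\log$ loss (as in Theorem B), producing an iteration in which $N$ is replaced by a double logarithm rather than a single one. Consequently the density doubles only every $\log\log N$ stages, and the final bound is $\delta \ll 1/\log\log\log N$ rather than the stronger bounds available in the pure polynomial or pure prime settings. Making the minor arc estimate strong enough to avoid further loss, and showing that the rescaled polynomial remains admissible with controlled constants, is the technically delicate heart of the argument.
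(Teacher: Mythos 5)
Theorem~E is not proved in this paper; it is quoted from Li and Pan \cite{lipan}, so there is no ``paper's own proof'' to compare against directly. The closest analogue is the paper's proof of its Theorem~\ref{pint}, which subsumes Theorem~E (when $h(1)=0$, one may take $r_d\equiv 1$ for every $d$, so $h$ is automatically $\P$-intersective) and in fact obtains the much stronger bound $(\log N)^{-c}$. Measured against that framework, your high-level outline is on the right track: circle method on a weighted prime exponential sum, Siegel--Walfisz on the major arcs, a Vinogradov/Weyl hybrid estimate on the minor arcs, a density increment, and a Lucier-style rescaling to preserve the $\P$-intersective structure. You also correctly identify the minor-arc bound for polynomials of primes (Theorem~4.1 of \cite{lipan}, appearing here as Lemma~\ref{min}) as the essential technical input.

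That said, the proposal has genuine gaps that prevent it from establishing the stated bound. First, the scale choice ``$h(X)\asymp\delta N$'' is not standard and not what you want: the summation range should be $X\asymp N^{1/k}$ (cf.\ $M_d$ in Section~\ref{defsec}); inserting a $\delta$-dependence there distorts the bookkeeping and gives no benefit. Second, the claimed density increment $\delta\to\delta(1+c)$ on a progression of length $\asymp N/q^{2k}$ is stronger than what the $L^\infty$ or $L^2$ pigeonhole actually delivers in this setting; the realistic increment is additive of the form $\delta\to\delta+c\delta^{\gamma}$ (as in Lemma~\ref{mainit}), which changes the iteration count from $O(\log\delta^{-1})$ to $O(\delta^{-(\gamma-1)})$ and therefore changes the final bound. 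Third, and most seriously for a quantitative theorem, the passage from ``the polynomial ingredient loses a log and the prime ingredient loses another log'' to the conclusion $\delta\ll 1/\log\log\log N$ is a heuristic, not an argument. The triple logarithm in Theorem~E comes from the specific interaction between (i) the Siegel--Walfisz constraint $q\leq(\log X)^D$, which limits the admissible moduli at each stage, and (ii) the shrinkage of the ambient interval across iterations, and this interaction has to be tracked explicitly — it is precisely what distinguishes $1/\log\log\log N$ from, say, a power of $\log N$ or a double logarithm. Without that bookkeeping the proposal reproduces the qualitative conclusion but not the stated bound. Finally, the factorization $h(x)=(x-1)g(x)$ is harmless but unused; the only fact you need from $h(1)=0$ is that $1$ is a unit root of $h$ modulo every $q$, which is exactly the $\P$-intersectivity used throughout the paper.
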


Additionally, L\^e  and Li-Pan  applied transference principles inspired by work of Green and Tao (\cite{Green}, \cite{GrTao}, \cite{GrTao2}) to prove analogs of Theorems C and E, respectively, for dense subsets of the primes, which we denote by $\P$. We state the qualitative results below.

\begin{thmF}[L\^e, \cite{Le}] If $h\in \Z[x]$ is an intersective polynomial with positive leading term, $A\subseteq \P$, and 
\begin{equation} \label{relden} \limsup_{N\to \infty} \frac{|A\cap[1,N]|}{|\P\cap[1,N]|} >0 ,
\end{equation} then there exist $a,a'\in A$ and $n\in \N$ with $a-a'=h(n)>0$.

\end{thmF}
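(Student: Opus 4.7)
The strategy is a Green--Tao style transference principle reducing Theorem F to Theorem C. Since the pattern $a-a'=h(n)$ is linear in the unknowns $a,a'$, only Fourier (i.e.\ $U^2$) pseudorandomness of the primes is required; the higher-order Gowers uniformity needed for longer progressions is not.

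First I would perform the $W$-trick. Fix $W=\prod_{p\leq w}p$ for a slowly growing $w=w(N)$. Intersectivity of $h$ furnishes an integer $z$ with $W\mid h(z)$, and then $g(x):=h(Wx+z)/W\in\Z[x]$ is again intersective of degree $k$ with positive leading term. Pigeonholing (\ref{relden}) over residue classes produces a $b$ with $(b,W)=1$ such that $A':=\{(a-b)/W:a\in A,\,a\equiv b\pmod{W}\}$ has positive relative density $\delta>0$ among the primes in the relevant progression. Set $f:=\tfrac{\phi(W)}{W}\mathbf{1}_{A'}$. After the $W$-trick, the standard majorant $\nu$ for the primes (built from the von Mangoldt function or a Selberg sieve) satisfies Bourgain's restriction estimate: for some fixed $q\in(2,3)$, $\|\widehat{F}\|_q\ll 1$ for every $F$ with $0\leq F\leq\nu$ and mean $\ll 1$. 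This in turn yields a Green-style dense model decomposition $f=f_1+f_2$ with $0\leq f_1\leq 2$, $\mathbb{E}[f_1]\gg\delta$, and $\|\widehat{f_2}\|_\infty\leq\eta$ for any prescribed $\eta>0$.

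Next I would count weighted configurations via the bilinear form $T(F,G):=\sum_{x,y}\sum_{n:\,g(n)>0}F(x)G(y)\mathbf{1}_{x-y=g(n)}$, expanding $T(f,f)$ into the four pieces $T(f_1,f_1),T(f_1,f_2),T(f_2,f_1),T(f_2,f_2)$. A supersaturated/counting version of Theorem C---extractable from Lucier's density-increment argument either by averaging over subprogressions or by iterating until $\mathbb{E}[f_1]=\Theta(1)$---gives $T(f_1,f_1)\gg_\delta N$. Each error term involving $f_2$ is rewritten in Fourier form as $\int\widehat{F}(\alpha)\widehat{G}(-\alpha)S_g(\alpha)\,d\alpha$, where $S_g(\alpha)=\sum_{n\leq cN^{1/k}}e(\alpha g(n))$, and controlled by H\"older together with the restriction estimate above and Weyl's minor-arc bound for $S_g$. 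Taking $\eta$ sufficiently small in terms of $\delta$ and $g$ yields $T(f,f)>0$, producing a valid positive $g(n)$-difference in $A'$ and hence a positive $h(n)$-difference in $A$.

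The principal obstacle is twofold. First, one must upgrade Theorem C from a containment statement to a uniform weighted count $T(f_1,f_1)\gg_\delta N$ over bounded functions $f_1$ of mean $\gg\delta$; this is standard in such transference setups but is not part of Theorem C as stated. Second---and this is what makes the polynomial setting genuinely harder than linear shifts---one must maintain intersectivity of the iterated polynomial after the $W$-trick and throughout any subsequent density-increment, which is precisely the technical role played by Lucier's $p$-adic root framework underlying Theorem C. Reconciling these two requirements so as to invoke Theorem C cleanly as a black box, rather than reproving it from scratch, is the main technical step.
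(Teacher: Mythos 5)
Note first that this paper does not prove Theorem F; it cites it from L\^e \cite{Le}, and the closest the paper comes to a proof is the informal discussion in Appendix \ref{C} of how the analogous Theorem \ref{rel} follows from the uniform counting estimate of Theorem \ref{pint2} via the transference principle of \cite{Le}. Your blueprint --- a $W$-trick, a restriction-estimate dense-model decomposition into $f_1+f_2$, expansion of the bilinear count $T(f,f)$ into four pieces, a uniform supersaturated form of Theorem C to handle $T(f_1,f_1)$, and Fourier/H\"older control of the terms involving $f_2$ --- is exactly the strategy the paper attributes to \cite{Le} and mirrors in Appendix \ref{C} (cf.\ Theorem \ref{transready}). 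So at the level of architecture you have the right proof.

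The genuine gap is in your $W$-trick. You write that intersectivity ``furnishes an integer $z$ with $W\mid h(z)$'' and that $g(x)=h(Wx+z)/W$ ``is again intersective,'' but neither the choice of $z$ nor the divisor $W$ is correct as stated. If $z$ is an arbitrary solution of $W\mid h(z)$, the polynomial $g$ can simply fail to be intersective: take $h(x)=x^2-p^2$ and $W=p^2$, so that $z=0$ satisfies $W\mid h(z)$, yet $g(x)=h(p^2x)/p^2=p^2x^2-1\equiv -1\pmod p$ has no root mod $p$. The point is that $z$ must be chosen compatibly with fixed $p$-adic roots $z_p$ of $h$ (via CRT), i.e.\ $z=r_W$ in the notation of Section \ref{defsec}, because an arbitrary root modulo $p^{v_p(W)}$ need not lift $p$-adically. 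Moreover, even with $z=r_W$, dividing by $W$ rather than by Lucier's $\lambda(W)$ (which tracks root multiplicities) inflates the content of the resulting polynomial by the factor $\lambda(W)/W$, which is unbounded and would spoil the Gauss-sum estimates (cf.\ Lemma \ref{gs} and Corollary \ref{Gaq}) and hence the quantitative uniformity you need in the ``supersaturated Theorem C.'' The correct auxiliary polynomial is $h_W(x)=h(r_W+Wx)/\lambda(W)$, whose integrality, intersectivity, and bounded content are precisely what Lucier's Lemmas 21 and 28 establish. You flag this circle of issues at the end as ``the principal obstacle,'' which is accurate, but the construction you actually propose does not yet resolve it, and resolving it is not a cosmetic matter: it is the content of the $p$-adic framework you would be importing from \cite{Lucier}.
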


\noindent If $A\subseteq \P$ meets condition (\ref{relden}), we say that it has \textit{positive relative upper density} in the primes.

\begin{thmG}[Li, Pan, \cite{lipan}] If $h \in \Z[x]$ has positive leading term, $h(1)=0$, and $A\subseteq \P$ has positive relative upper density, then there exist $a,a' \in A$ and $p\in \P$ with $a-a'=h(p)>0$.
\end{thmG}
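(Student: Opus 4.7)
The plan is to establish Theorem G via a transference principle that reduces it to the dense-integer setting of Theorem E. Fix $A \subseteq \P$ with $|A \cap [1,N]| \ge \delta|\P \cap [1,N]|$ along arbitrarily large $N$, and on $[1,N]$ consider the logarithmically weighted indicator $f(n) = (\log n)\mathbf{1}_A(n)$, which by the prime number theorem has mean value at least $\delta - o(1)$. Setting $P$ so that $h(P) \asymp N$, the aim is to show that the trilinear count
\[ T(f,f) := \sum_{p \le P}(\log p) \sum_{x \in [1,N]} f(x) f(x + h(p)) \]
is strictly positive; any triple contributing to $T(f,f) > 0$ produces the desired $a,a' \in A$ and $p \in \P$ with $a - a' = h(p) > 0$.

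The key ingredient is a pseudorandom majorant $\nu : [1,N] \to [0,\infty)$ with $f \le \nu$ and mean value $\asymp 1$, taken to be a truncated Selberg-type divisor sum of the form $\nu(n) \asymp (\log R)\big(\sum_{d \mid n,\, d \le R} \mu(d)\log(R/d)\big)^2$ with $R$ a small power of $N$ depending on $\deg(h)$. I would verify the pseudorandomness of $\nu$ relative to $T$ by Fourier decomposition: the major-arc contribution reproduces the mean-value heuristic, while the minor-arc contribution is controlled via Weyl-type savings for the exponential sums $\sum_{p \le P}(\log p) e(\theta h(p))$, established by Vinogradov's method applied through Vaughan's identity. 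The hypothesis $h(1) = 0$ is important here: since $h(1) \equiv 0 \pmod{q}$ for every $q$, the singular series is non-degenerate with no $p$-adic polynomial correction, so a single uniform majorant suffices rather than the shifted majorants required in the general intersective setting treated by Lucier and L\^e. A Green--Tao-style dense model theorem then produces $\tilde f : [1,N] \to [0,1]$ with $\sum_n \tilde f(n) \approx \sum_n f(n)$ and $T(\tilde f, \tilde f) = T(f,f) + o(NP)$.

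With $\tilde f$ in hand, I would pass to the superlevel set $\tilde A = \{x \in [1,N] : \tilde f(x) \ge \delta/2\}$, of density $\gg \delta$, and a Varnavides-type argument partitioning $[1,N]$ into long subintervals and applying Theorem E on each promotes the mere existence of one polynomial difference into the quantitative lower bound $T(\tilde f, \tilde f) \gg_{\delta} NP$, which transfers back to $T(f,f) > 0$. The main obstacle is the minor-arc analysis: producing Weyl-type savings for $\sum_{p \le P}(\log p)e(\theta h(p))$ strong enough to close the transference, together with the accompanying restriction-type estimate for $\nu$. Once these are in place, the remaining steps are routine adaptations of the Green--Tao and L\^e frameworks, specialized to the convenient setting $h(1) = 0$.
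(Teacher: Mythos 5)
Your high-level architecture---weighting by $\log p$, constructing a pseudorandom Selberg-type majorant, applying a Green--Tao-style dense model theorem, and reducing to a quantitative count for bounded densities---is indeed the transference strategy Li and Pan use for Theorem G, and the one this paper uses for its $\P$-intersective analogue Theorem~\ref{rel} (see Appendix~\ref{C}); note the paper itself only cites Theorem G and does not reprove it. The concrete gap is in the step meant to supply the lower bound $T(\tilde f,\tilde f)\gg_\delta NP$. Theorem E is purely a density threshold; partitioning $[1,N]$ into $N/L$ intervals of length $L$ and applying Theorem E on each only detects differences $h(p)\le L$ and yields at most $O_\delta(N/L)$ (or, iterating inside each interval, $O_\delta(N)$) solutions, whereas the transference error in this setting lives at scale $N\Psi_d\asymp N^{1+1/k}$. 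Unlike three-term progressions, the configuration $a-a'=h(p)$ is not dilation-invariant: passing to arithmetic progressions of common difference $d>1$ changes the polynomial to the auxiliary polynomial $h_d$ and forces the prime into a residue class $r_d\bmod d$. Any honest Varnavides-type boost through subprogressions therefore requires a quantitative theorem \emph{uniform in $d$} across the whole family $\{h_d\}$, which is precisely what Theorem~\ref{pint2} provides in this paper (and what Li--Pan establish directly in the $h(1)=0$ case) and which cannot be extracted from the bare existence statement in Theorem E.

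Relatedly, your claim that $h(1)=0$ lets a single uniform majorant suffice is not quite right. Even with $h(1)=0$ the density increment passes to progressions $r_d+d\Z$, so after rescaling the relevant configuration becomes $\ell-\ell'=h_d(n)$ with $r_d+dn$ prime; the auxiliary polynomials $h_d$ vary nontrivially with $d$, and the prime-side majorant must be adapted to the residue class $r_d\bmod d$, exactly as in the weighted measure $\nu_d$ of Section~\ref{defsec}. What $h(1)=0$ actually buys is that one may take $z_p=1$ for every $p$, so $r_d\equiv1\bmod d$; this simplifies the bookkeeping and makes the local conditions trivially satisfied, but it does not collapse the family $\{h_d\}$ or the corresponding local measures to a single object. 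The minor-arc and restriction estimates you flag as the main obstacle are nontrivial but standard; the structural deficiency in the proposal is the missing uniform quantitative input to feed the transference.
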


\subsection{Main result of this paper} Just as there are intersective polynomials without integer roots, it seems natural to think that a result like Theorem E should hold for a larger class of polynomials. A moments consideration indicates that the correct analog to the intersective condition on a polynomial $h$ when looking for differences of the form $h(p)$ is to insist that $h$ not only has a root modulo $q$ for every $q \in \N$, but has a root at a congruence class that admits infinitely many primes, leading to the following definition.

\begin{definition}  A polynomial $h \in \Z[x]$ is called {\it $\P$-intersective} if for every $q \in \N$, there exists $r \in \Z$ such that $(r,q)=1$ and $q \mid h(r)$. Equivalently, for every $p\in \P$, there exists $z_p \in \Z_p$, where $\Z_p$ denotes the $p$-adic integers, such that $h(z_p)=0$ and $z_p \not\equiv 0 \mod p$.
\end{definition}
\begin{rem}After the initial version of this paper was posted on arXiv server, the author learned that in \cite{Le2}, a survery on problems and results on intersective sets, Th\'ai Ho\`ang L\^e independently posed the same question and arrived at the same definition, which he coined \textit{intersective polynomials of the second kind}. Even later, the author learned that these polynomials were considered by Wierdl \cite{wierdl} in his thesis, where he called them \textit{intersective along the primes}.  
\end{rem}
Examples of $\P$-intersective polynomials include any polynomial with a root at $1$ or $-1$, any polynomial with two rational roots $a/b$ and $c/d$ such that $(ab,cd)=1$, and presumably lots more. The necessity of this condition is almost as clear as that of the original intersective condition. To exhibit this, suppose we have $h\in \Z[x]$ and $q\in \N$ such that the only roots of $h$ modulo $q$ share common factors with $q$. In particular, there are finitely many primes $p$ such that $q\mid h(p)$. Letting $m=\max\{h(p)/q: p\in \P, \ q\mid h(p)\}$ if such primes exist and $m=0$ otherwise, we see that $q(m+1)\N$ is a set of positive upper density which contains no differences of the form $h(p)$. Wierdl \cite{wierdl} observed in his thesis that one can again deduce the sufficiency of this condition, in a qualitative sense, from the aforementioned theorem of Kamae and Mend\'es France \cite{KMF}, and here we borrow heavily from \cite{Lucier}, \cite{Ruz}, \cite{LM}, and \cite{lipan} to establish the following quantitative result.

\begin{theorem} \label{pint} Suppose $h\in \Z[x]$ is a $\P$-intersective polynomial of degree $k\geq 2$ with positive leading term. If $A\subseteq [1,N]$ and $h(p) \notin A-A$ for all $p\in \P$ with $h(p) > 0$, then 
\begin{equation}\label{bound}\frac{|A|}{N} \ll (\log N)^{-c}\end{equation} 
for any $c<1/(2k-2)$, where the implied constant depends only on $h$ and $c$. 
\end{theorem}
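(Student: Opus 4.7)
The plan is to adapt the Fourier-analytic density increment strategy underlying Theorem C to the prime setting, drawing on the prime exponential sum estimates used by Ruzsa--Sanders in their refinement of Theorem B and by Li--Pan in their proof of Theorem E. Fix $Y$ of order $(N/h_k)^{1/k}$ (where $h_k$ is the leading coefficient of $h$), so that $0 < h(n) \le N$ for all sufficiently large $n \le Y$, and introduce the von-Mangoldt-weighted exponential sum
\begin{equation*} R(\alpha) = \sum_{n \le Y} \Lambda(n)\, e(h(n)\alpha). \end{equation*}
If $A \subseteq [1,N]$ contains no difference of the form $h(p)$ with $h(p)>0$, then $\int_0^1 |\widehat{1_A}(\alpha)|^2 R(\alpha)\,d\alpha$ is controlled by the contribution from prime powers $p^j$ with $j \ge 2$, which is negligible compared to the main term one expects from a ``random'' set of density $\delta = |A|/N$.

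I would then carry out the standard circle-method decomposition, with major arcs $\mathfrak{M}$ consisting of neighborhoods of rationals $a/q$ with $q \le Q$ (for a threshold $Q = Q(\delta)$), and minor arcs $\mathfrak{m} = [0,1) \setminus \mathfrak{M}$. On $\mathfrak{m}$, Vaughan's identity combined with Weyl differencing for a degree-$k$ polynomial yields a power-saving bound for $|R(\alpha)|$. On $\mathfrak{M}$, $R$ admits the usual asymptotic expansion as a singular series times a smooth integral, and the $\P$-intersective hypothesis is precisely what guarantees that the singular series is bounded below by a positive constant depending only on $h$, since the local densities count roots coprime to each prime modulus. Subtracting this main term and applying Parseval yields a dichotomy: either $\delta \ll (\log N)^{-1/(2k-2)}$ already, or else there exist $a$, $q \le Q$, and small $\beta$ with $|\widehat{1_A}(a/q + \beta)| \gg \delta|A|$. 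This Fourier concentration translates, by the standard pigeonhole argument, into a constant-factor density increment $\delta \mapsto \delta(1+c)$ on a long arithmetic progression of common difference $q$.

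To iterate, I would follow the Lucier scheme of allowing the polynomial to evolve. After obtaining the increment, rescale to a set $A' \subseteq [1,N']$ with $N' \gg N/q^k$ and replace $h$ by $h_1(x) = h(qx + z)/d$, where $z$ is an integer lift of a root of $h \bmod q$ and $d$ the corresponding divisor. The crucial point is that the $\P$-intersective hypothesis lets us choose $z$ coprime to $q$, using the $p$-adic roots $z_p \in \Z_p$ with $z_p \not\equiv 0 \pmod p$ guaranteed by the definition for each $p \mid q$. This choice ensures that $h_1$ is itself $\P$-intersective, and that primes input to $h_1$ correspond to primes input to $h$ coprime to $q$ --- the arithmetic restriction precisely needed to propagate the hypothesis through the iteration. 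A constant-factor increment forces termination after $O(\log(1/\delta))$ steps, at which point $\delta$ exceeds $1$, a contradiction.

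The main obstacle is the quantitative bookkeeping across this iteration. Each step shrinks $N$ by at most $q^k \le Q^k = \delta^{-O(k)}$, inflates the coefficients of the iterated polynomial, and threatens to degrade the implied constants; one must verify that these constants remain bounded in terms of $h$ alone despite the sequence of rescalings, which requires careful $p$-adic tracking of the roots. Balancing $O(\log(1/\delta))$ iterations against the accumulated loss in $N$ produces the bound $\delta \gg (\log N)^{-c}$ for any $c < 1/(2k-2)$. The exponent $1/(2k-2) = 1/(2(k-1))$ is twice as weak as the exponent $1/(k-1)$ in Theorem C, and the factor of two reflects the intrinsically weaker Weyl-type savings available for the prime-weighted exponential sum $R(\alpha)$ (after Vaughan decomposition) compared with the unweighted sum $\sum_{n \le Y} e(h(n)\alpha)$ that powers Lucier's argument.
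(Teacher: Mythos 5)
Your high-level blueprint matches the paper's: a Fourier-analytic density increment, a circle-method decomposition with a prime-weighted exponential sum, and Lucier's scheme of auxiliary polynomials $h_d(x)=h(r_d+dx)/\lambda(d)$ with $(r_d,d)=1$, where the $\P$-intersective hypothesis is exactly what makes the coprimality survive the iteration. But several quantitative claims in your sketch are wrong in ways that matter, and one essential technical input is missing.

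First, the density increment you describe is far too strong. You claim the Fourier concentration yields $|\widehat{1_A}(a/q+\beta)|\gg\delta|A|$ and hence a multiplicative increment $\delta\mapsto\delta(1+c)$ with only $O(\log(1/\delta))$ iterations. What the circle-method input actually delivers (cf. Lemmas \ref{L2} and \ref{dinc}) is an $L^2$ concentration $\int_{\mathbf{M}_q}|\widehat{f_B}|^2\gg\sigma^{\gamma+1}L$ with $\gamma=k+\epsilon/2$, which translates into the \emph{additive} increment $\sigma\mapsto\sigma+c\sigma^{\gamma}$. The iteration therefore runs for $m\sim\delta^{-(\gamma-1)}\approx\delta^{-(k-1)}$ steps, not $O(\log(1/\delta))$. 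If a constant-factor increment were available here, the bookkeeping you outline would already yield a bound of the shape $\exp(-c\sqrt{\log N})$, dramatically stronger than $(\log N)^{-c}$; the fact that your numerology overshoots the theorem is a signal that this step cannot work as stated.

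Second, your explanation for the exponent $1/(2k-2)$ is incorrect. You attribute the loss of a factor of two relative to Theorem C to ``intrinsically weaker Weyl-type savings'' on the minor arcs after Vaughan's identity. In fact the Li--Pan/Vinogradov minor arc bound (Lemma \ref{min}) gives a power saving that is perfectly adequate and is not the bottleneck. The factor of two comes from the \emph{major} arcs: the prime-counting asymptotics in arithmetic progressions (Siegel--Walfisz, and more precisely the Ruzsa--Sanders refinement, Lemma \ref{RS}) are only valid for moduli up to roughly $e^{c\sqrt{\log X}}$. The whole argument requires $Q(\delta)=\exp(C_0\delta^{-(k+\epsilon-1)})\leq e^{c_1\sqrt{\log N}}$, and this square root is precisely what turns $(\log N)^{-1/(k-1)}$ into $(\log N)^{-1/(2k-2)}$.

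Third, and related, you make no mention of exceptional zeros of Dirichlet $L$-functions. To push the moduli up to $e^{c\sqrt{\log N}}$ one must use Ruzsa--Sanders' dichotomy: either there is no exceptional character, or there is one of modulus $q_0$, in which case the argument must first pass to a subprogression of step $\lambda(q_0)$ and keep $q_0\mid d$ throughout the iteration, with the asymptotic for $\psi(X,a,q)$ carrying an explicit $\chi(a)X^\rho$ secondary term (see (\ref{lb}) and (\ref{psiB})). Without this the major arc asymptotic for the weighted sum $S_M(\alpha)$ simply is not available in the needed range of $q$, and the lower bound $\Psi_d\gg M_d/q_0$ on the main term may fail. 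Your outline as written treats the singular series as unconditionally bounded below, which is not justified for the range of $\delta$ the theorem covers.
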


\noindent In fact, with a few careful modifications one can make this arbitrarily small power of $\log N$ more explicit, but here we stick to the slightly less precise version for a more pleasing exposition. 

\subsection{Additional Results}
In addition to Theorem \ref{pint}, one can conclude the following analogs of previous results from the estimates we obtain along the way.

\begin{theorem} \label{pintQ} Suppose $h\in \Z[x]$ is a  $\P$-intersective quadratic polynomial with positive leading term. If $A \subseteq[1,N]$ and $h(p) \notin A-A$ for all $p \in \P$ with $h(p)>0$, then  
\begin{equation*} \frac{|A|}{N} \ll (\log N)^{-c \log\log\log\log N}
\end{equation*}for any $c<1/2\log 3$, where the implied constant depends only on $h$ and $c$. 
\end{theorem}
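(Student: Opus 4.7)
The plan is to combine the machinery developed in the proof of Theorem \ref{pint} with the iterative Fourier-analytic density increment strategy used for Theorem D in \cite{HLR}, which itself refines the Pintz-Steiger-Szemer\'edi approach \cite{PSS}. Theorem \ref{pint} in the case $k=2$ already supplies all of the relevant inputs: a Fourier $L^2$-type estimate for the normalised weighted count of primes $p$ with $h(p)$ in a given interval, and a major/minor arc decomposition showing that each significantly large Fourier coefficient of the corresponding counting function lies close to a rational with denominator $\ll (\log N)^{O(1)}$.

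First I would recast the iteration so that a failure to find differences of the form $h(p)$ produces not a single density increment of size $\alpha^2$, but a collection of many frequencies at which the Fourier coefficient of $A$ is comparable to $\alpha$. Appealing to the quadratic minor arc estimate (the $\P$-analogue of the Weyl-type bound, which is responsible for the factor-of-$2$ loss in Theorem \ref{pint} relative to Theorem C), these frequencies localise near rationals $a_i/q_i$ with each $q_i$ bounded by a small power of $\alpha^{-1}$. After pigeonholing to a common denominator, one is left with many residues $a_i$ modulo a fixed $q$.

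The combinatorial heart of the Pintz-Steiger-Szemer\'edi argument then applies: inside such a large set of residues one finds the three-term arithmetic relations exploited in \cite{PSS} and \cite{HLR}, which, when translated back through Plancherel, yield a density increment on a genuine arithmetic progression of size roughly $\alpha^{1+o(1)}$ rather than $\alpha^2$. Iterating, each step multiplies the density by a factor strictly greater than $1$ that is essentially independent of $\alpha$, and balancing this gain against the shrinking length of the progression produces the double-logarithmic exponent. The factor of $2$ in the constraint $c<1/(2\log 3)$ again reflects the prime restriction in the minor arc estimate, precisely as in the comparison between Theorems C and \ref{pint}.

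The principal obstacle, as in Theorem \ref{pint}, is that every density increment passes to an arithmetic progression, and on that progression one must continue to find many primes $p$ with $h(p)$ lying in the prescribed residue class determined by the evolving $p$-adic roots of $h$. I expect that carrying the Lucier-style bookkeeping of auxiliary polynomials, adapted to the $\P$-intersective setting as in the proof of Theorem \ref{pint}, simultaneously through many frequencies, while maintaining uniform sieve-theoretic control of prime exponential sums on progressions that shrink at each stage of the iteration, will be the most delicate technical point.
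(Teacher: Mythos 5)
Your high-level plan matches the paper's: Appendix~\ref{C} says explicitly that Theorem~\ref{pintQ} is obtained by ``reproducing the method of \cite{HLR} used to prove Theorem D,'' feeding in weighted analogues of the major and minor arc estimates from Sections~\ref{M} and Appendix~\ref{A}. Your description of the \cite{PSS}/\cite{HLR} machine (many large Fourier coefficients near rationals with small denominators, pigeonholing, iterating a quasi-constant multiplicative density gain) is broadly the right picture, and you correctly flag Lucier-style bookkeeping of auxiliary polynomials as the bookkeeping burden.

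However, you misattribute the factor of $2$. You write that the quadratic \emph{minor} arc estimate is ``responsible for the factor-of-$2$ loss in Theorem \ref{pint} relative to Theorem C'' and that the same mechanism gives $c<1/(2\log 3)$ here. In fact the factor of $2$ in both theorems comes from the \emph{major} arc side: the Ruzsa--Sanders input (Lemma~\ref{RS}, and ultimately the Siegel--Walfisz error term $Xe^{-c\sqrt{\log X}}$) only gives usable asymptotics for primes in progressions when the relevant moduli are at most roughly $e^{c\sqrt{\log N}}$. This forces the ceiling $Q(\delta)\le e^{c_1\sqrt{\log N}}$, i.e.\ $\delta\gg (\log N)^{-1/2(k+\epsilon-1)}$, which is the $\sqrt{\cdot}$ that halves the exponent. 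Concretely, in Appendix~\ref{C}.2 the paper notes that the analogue of Lemma 1 of \cite{HLR} must replace the constraint $d\le N^{.01}$ by $d\le e^{c\sqrt{\log N}}$, which in turn replaces the $L^2$-concentration threshold $\sigma^2(\log N)^{-1+\epsilon}$ by $\sigma^2(\log N)^{-1/2+\epsilon}$; that swap, not the Weyl/Vinogradov-type minor arc bound, is where the $2$ enters. The minor arc bound (Lemma~\ref{min}) continues to give a saving that comfortably dominates the required threshold.

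Two further points the paper flags that your sketch leaves implicit: (i) one must first pass to a subprogression of step size $\lambda(q_0)$ (the exceptional modulus from Lemma~\ref{RS}) before the iteration starts, so that the prime-counting asymptotics remain uniform over the moduli $d$ encountered, and (ii) the ``square root cancellation'' the \cite{HLR} method uses on major arcs only comes out here with an extra $\log\log q$ factor, since the weighted prime exponential sum carries a $q^{1/2}/\phi(q)$ rather than a clean $q^{-1/2}$; the paper notes this is harmless but it does need to be checked. One also needs the higher moment bounds of Appendix~\ref{C}.1 (this is where the choice $s=2^k+6$ is used). These are exactly the ``delicate technical points'' you anticipate, so your instinct about where the difficulty lies is sound even though your account of the factor of $2$ is not.
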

\noindent Just as in the traditional setting, the $\P$-intersective condition is greatly simplified when restricted to degree $2$, as a quadratic polynomial is $\P$-intersective if and only if it has rational roots $a/b$ and $c/d$ with $(ab,cd)=1$. 
\begin{theorem}\label{rel} If $h\in \Z[x]$ is a $\P$-intersective polynomial with positive leading term and $A\subseteq \P$ has positive relative upper density, then there exist $a,a'\in A$ and $p\in \P$ with $a-a'=h(p)>0$.
\end{theorem}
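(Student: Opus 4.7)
The plan is to extend the transference argument of Li and Pan (which established Theorem G under the stronger hypothesis $h(1)=0$) to all $\P$-intersective polynomials by interleaving the iterative $p$-adic reduction used in the proof of Theorem \ref{pint}. Assume $A\subseteq\P$ has relative upper density $\delta>0$, fix a large $N$, and let $W=\prod_{p\leq w}p$ for a slowly growing $w=w(N)$. By the usual $W$-trick and pigeonholing over residues mod $W$, there is a class $b_A$ coprime to $W$ in which $A':=\{(a-b_A)/W:a\in A,\ a\equiv b_A\!\!\pmod W\}$ is relatively dense in the $W$-tricked primes on $[1,N/W]$. The $\P$-intersective hypothesis supplies a residue $b_h$ coprime to $W$ with $W\mid h(b_h)$; setting $\tilde h(m):=h(b_h+Wm)/W\in\Z[x]$, a Hensel-type lift of the $p$-adic roots $z_p$ (coprime to $p$ by hypothesis) shows $\tilde h$ is itself $\P$-intersective. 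The target configuration $a-a'=h(p)$ then reduces to finding $m,m'\in A'$ and $m_p\in\N$ with $m-m'=\tilde h(m_p)$ and $b_h+Wm_p\in\P$.

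From here, I would run the Fourier-analytic density-increment machinery from the proof of Theorem \ref{pint}, with the integer Weyl sum estimates replaced by their prime-restricted analogs supplied by a Green--Tao-style pseudorandom majorant for primes in the progression $b_h\!\!\pmod W$. The counting operator
\[
T(A'):=\sum_{b_h+Wm_p\in\P}\abs{A'\cap(A'-\tilde h(m_p))}
\]
splits into major and minor arc contributions: the minor arcs are controlled by the prime restriction bound, while the major arcs either yield a main term large enough to force $T(A')>0$ (producing the desired triple $(a,a',p)$), or expose a Fourier concentration for $A'$ on a sub-progression, allowing a density increment. Since the relative density is bounded above by $1$, iteration must terminate in the first alternative.

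The main obstacle is the transference step, specifically obtaining a restriction estimate for primes along the polynomial values $\tilde h(m_p)$ that remains usable as the polynomial $\tilde h$ (and the underlying residue class $b_h$) evolves across iterations. In Li and Pan's setting the hypothesis $h(1)=0$ let them fix $b_h=1$ once and for all; here, the iterative $p$-adic root tracking from Theorem \ref{pint} must be synchronized with the prime-restriction machinery, and one must verify that a density increment for $A'$ on a sub-progression $q\Z+r$ still yields a set that is relatively dense in the $W$-tricked primes along the refined progression. Because Theorem \ref{rel} asks only for a qualitative conclusion, I need not track effective dependencies on the evolving polynomial, which considerably simplifies the bookkeeping and lets me quote the quantitative estimates from the proof of Theorem \ref{pint} as black boxes.
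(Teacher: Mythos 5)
Your proposal takes a structurally different route from the paper, and the difference is not merely cosmetic. The paper factors the problem into two independent steps: first it proves Theorem~\ref{pint2} (restated as Theorem~\ref{transready}), a \emph{dense} counting result saying that any bounded function $F$ on $[1,N]$ with average $\geq\delta$ has $\gg_{h,\delta} NM_d$ configurations $F(x)F(x+h_d(y))\nu_d(y)$, uniformly for all auxiliary polynomials $h_d$ with $d\leq\log N$; the prime constraint on the polynomial variable $y$ is already built into the weight $\nu_d$, and the entire density-increment/$p$-adic-reduction machinery lives inside this step, where it only ever sees dense subsets of intervals. Then, as a second and separate step, it applies L\^e's transference principle \emph{once} to move the set $A$ from being relatively dense in $\P$ to a bounded dense model on $[1,N]$; the technical input here is the $L^s$ higher-moment bound on $T(\alpha)$ discussed in Appendix~\ref{C}, and the uniformity in $d$ of Theorem~\ref{pint2} is exactly what one needs since transference does not control which $h_d$ will appear. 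You instead propose to run the density increment directly on $A'$, a sparse subset of $[1,N/W]$, interleaving a Green--Tao majorant and restriction estimate at each iteration step — an ``inside-out'' version of the same ingredients.

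The obstacle you flag at the end is real, and it is in fact the reason the paper does not argue your way. When you pass to a subprogression $q\Z+r$ and want to say the refined set is still relatively dense in the primes of that progression, you need control of $\psi(X;r,q)$ for moduli $q$ that grow with the iteration and can escape the Siegel--Walfisz range; the paper handles this via the Ruzsa--Sanders exceptional-zero dichotomy (Lemma~\ref{RS}) \emph{inside} the dense argument, but in your setup this must now also be made compatible, at every step, with the restriction estimate for the majorant on the evolving progression — a synchronization you acknowledge but do not carry out, and which ``qualitative only'' does not dispense with: the issue is structural, not one of effective constants. Two smaller points: the restriction estimate you need is for the \emph{set} $A'$, not (as your phrasing suggests) ``for primes along the polynomial values $\tilde h(m_p)$'' — the Weyl-sum side is already handled by Lemmas~\ref{asym}--\ref{min} and does not change when $A$ moves into $\P$. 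And your auxiliary polynomial $\tilde h(m)=h(b_h+Wm)/W$ differs from the paper's $h_d(x)=h(r_d+dx)/\lambda(d)$: dividing by $W$ rather than $\lambda(W)$ does give integer coefficients, but it is the $\lambda$-normalization together with Lemma~\ref{content} that keeps $\mathrm{cont}(h_d)$ bounded, which is essential for the Gauss sum bound in Corollary~\ref{Gaq}; without it the major-arc estimate degrades along the iteration.
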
 Equipped with the techniques and results of this paper, the modifications of the arguments in \cite{HLR} and \cite{Le} required to obtain Theorems \ref{pintQ} and \ref{rel}, respectively, are so minor that we do not provide the full details here. Alternatively, we discuss the required adaptations informally in Appendix \ref{C}.

\begin{center} \textbf{Acknowledgement} \end{center}
The author would like to thank his thesis advisor, Neil Lyall.
\section{Preliminaries} \label{prelim} 

To begin our effort to prove Theorem \ref{pint}, we fix a $\P$-intersective polynomial $h$ of degree $k\geq 2$ with positive leading term and an arbitrary $\epsilon>0$, and we set  $s=2^k+6$ and $K=2^{10k}$. For our current purposes, we only use that $s >9$, but the choice also plays a role in our discussion of Theorems \ref{pintQ} and \ref{rel}.  We also fix a natural number $N$ which, at the expense of constants in Theorem \ref{pint}, we are always free to insist is sufficiently large with respect to $h$ and $\epsilon$. For convenience, we take this as a perpetual hypothesis and abstain from including it further. We use the letters $C$ and $c$ to denote appropriately large or small positive constants, which will change from line to line and we allow, along with any implied constants, to depend on $h$ and $\epsilon$. 

\noindent Further, we fix a set $A \subseteq [1,N]$ with $|A|/N=\delta >0$ and set $Q(\delta) = \exp\Big(C_0\delta^{-(k+\epsilon-1)}\Big)$ for  a constant $C_0$. 

\subsection{Auxiliary Polynomials and Related Definitions} \label{defsec} We apply the modified density increment strategy described in \cite{Lucier}, which allows for the polynomial to change at each stage of the iteration. The following definitions describe all of the polynomials that we could potentially encounter, as well as several other objects that will appear in the argument.

\begin{definition}For each $p \in \P$, we fix $z_p \in \Z_p$ with $h(z_p)=0$ and $z_p \not\equiv 0 \mod p$. By reducing and applying the Chinese Remainder Theorem, the choices of $z_p$ determine, for each natural number $d$, a unique integer $r_d \in (-d,0]$, which consequently satisfies $d \mid h(r_d)$ and $(r_d,d)=1$. 

\noindent We define the function $\lambda$ on $\N$ by letting $\lambda(p)=p^m$, where $m$ is the multiplicity of $z_p$ as a root of $h$ in $\Z_p$, and then extending it to be completely multiplicative.

\noindent For each $d\in \N$, we define the \textit{auxiliary polynomial} $h_d$ by 
\begin{equation*} h_d(x)=h(r_d + dx)/\lambda(d).\end{equation*}

\noindent Lucier observed in Lemma 21 of \cite{Lucier} that each $h_d$ has integer coefficients, and it is important to note that the leading coefficients grow with $d$ at least as quickly, up to a constant, as the other coefficients.

\noindent Further, we let $\Lambda_{d}  =\{x \in \N: r_d+dx \in \P\}$, and for $L\in \N$ we define \begin{equation*}H_d=H_d(L)=\{ x\in \N : 0<h_d(x)<L/s\}
\end{equation*}
and $$M_d=M_d(L)= (L/sb_d)^{1/k},$$ where $b_d$ is the leading coefficient of $h_d$, noting that 
\begin{equation}\label{symdif} |[1,M_d] \triangle H_d | = O(1),
\end{equation} where $\triangle$ denotes the symmetric difference. We also define a function $\nu_d$ on $\Z$ by $$\nu_d(x)=\frac{\phi(d)}{d}\log(r_d+dx) 1_{\Lambda_d}(x),$$ where $\phi$ is the Euler totient function, and for a set $B\subseteq [1,L]$ we define $$\R_d(B) = \R_d(B,L)=\sum_{\substack{x\in \Z\\ y\in H_d}} 1_B(x)1_B(x+h_d(y))\nu_d(y).$$ In the definitions above, $L$ should always be replaced with the size of the appropriate ambient interval.
\end{definition} 
\subsection{Counting Primes in Arithmetic Progressions}
 
For $X,a,q\in \N$, we define 
\begin{equation*} \psi(X,a,q) = \sum_{\substack{p\in \P\cap[1,X] \\ p\equiv a \text{ mod } q}} \log p.
\end{equation*} 
The classical estimates on $\psi(X,a,q)$ come from the famous Siegel-Walfisz Theorem, which can be found for example in Corollary 11.19 of \cite{MV}.
\begin{lemma}[Siegel-Walfisz Theorem] \label{SW} If $q\leq (\log X)^D$ and $(a,q)=1$, then $$ \psi(X,a,q) = X/\phi(q) + O(Xe^{-c\sqrt{\log X}}) $$ for some constant $c=c(D)>0$.
\end{lemma}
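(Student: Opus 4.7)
The plan is to follow the standard textbook route (as in Davenport or Montgomery--Vaughan), reducing the estimate for $\psi(X,a,q)$ to a bound for the character sums $\psi(X,\chi) = \sum_{n\leq X} \chi(n)\Lambda(n)$ via the orthogonality relation
\begin{equation*}
\psi(X,a,q) = \frac{1}{\phi(q)}\sum_{\chi \bmod q} \bar{\chi}(a)\, \psi(X,\chi),
\end{equation*}
valid when $(a,q)=1$. The principal character contributes $\psi(X,\chi_0) = X + O(X e^{-c\sqrt{\log X}})$ by the classical prime number theorem (plus a harmless $O(\log(qX))$ from primes dividing $q$), yielding the main term $X/\phi(q)$. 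The whole task is therefore to show $\psi(X,\chi) \ll X e^{-c\sqrt{\log X}}$ for every non-principal character $\chi$ modulo $q$ when $q \leq (\log X)^D$.

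First I would reduce to primitive characters, since if $\chi$ is induced by a primitive $\chi^*$ modulo $q^* \mid q$, then $\psi(X,\chi)$ differs from $\psi(X,\chi^*)$ by $O((\log qX)^2)$. Next, using Perron's formula and shifting the contour in the standard way, $\psi(X,\chi^*)$ is expressed as an explicit sum over non-trivial zeros $\rho=\beta+i\gamma$ of $L(s,\chi^*)$:
\begin{equation*}
\psi(X,\chi^*) = -\sum_{|\gamma|\leq T} \frac{X^\rho}{\rho} + O\!\left(\frac{X(\log qX)^2}{T}\right).
\end{equation*}
Inserting the classical zero-free region $\sigma > 1 - c_1/\log(q(|t|+2))$, valid for every primitive $L$-function except possibly at one real ``exceptional'' zero $\beta_1$ of a real character, together with the standard zero-density estimate $N(\chi^*,T) \ll T\log(qT)$, and finally choosing $T = \exp(\sqrt{\log X})$, gives the bound $O(X e^{-c\sqrt{\log X}/2})$ provided no exceptional zero is too close to $1$.

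To handle the exceptional zero when $q$ is as large as $(\log X)^D$, I would invoke Siegel's theorem: for any fixed $\eta>0$ there is an (ineffective) $C(\eta)>0$ such that every real zero $\beta_1$ of $L(s,\chi)$ with $\chi$ a real primitive character modulo $q$ satisfies $\beta_1 < 1 - C(\eta) q^{-\eta}$. Taking $\eta$ sufficiently small compared with $1/D$ ensures that the contribution $X^{\beta_1}/\beta_1$ of the Siegel zero is still $O(X e^{-c\sqrt{\log X}})$ uniformly in $q \leq (\log X)^D$. Summing the resulting estimate $\psi(X,\chi) \ll X e^{-c\sqrt{\log X}}$ over the $\phi(q) \leq (\log X)^D$ characters, the error is absorbed into $O(Xe^{-c'\sqrt{\log X}})$ after slightly shrinking $c$, completing the proof.

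The main obstacle is the potential Siegel zero of a real character: it is the single place where the method becomes ineffective and where the range $q \leq (\log X)^D$ genuinely constrains the argument. Every other step (orthogonality, Perron, zero-free region away from the real axis, zero-density bounds) is essentially routine analytic number theory once $T$ is chosen as $\exp(\sqrt{\log X})$.
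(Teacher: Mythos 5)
Your proof is correct and follows the standard textbook argument (orthogonality, Perron, zero-free region, Siegel's theorem for the exceptional zero). The paper itself does not prove this lemma but cites it as Corollary 11.19 of Montgomery--Vaughan, where essentially this same argument appears.
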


Ruzsa and Sanders \cite{Ruz} established asymptotics for $\psi(X,a,q)$ for certain moduli $q$ beyond the  limitations of Lemma \ref{SW} by exploiting a dichotomy based on exceptional zeros, or lack thereof, of Dirichlet $L$-functions. In particular, the following result follows from their work.

\begin{lemma} \label{RS} If $ Q(\delta) \leq e^{c_1\sqrt{\log N}}$ for a sufficiently small constant $c_1=c_1(k)>0$, then there exist $q_0 \leq Q(\delta)^{3K}$ and $\rho \in [1/2,1)$ with $(1-\rho)^{-1} \ll q_0$ such that 
\begin{equation}\label{lb} \psi(X,a,q)= \frac{X}{\phi(q)}-\frac{\chi(a)X^{\rho}}{\phi(q)\rho}+O(Xe^{-30kK^2c_1\sqrt{\log X}}),
\end{equation}
where $\chi$ is a Dirichlet character modulo $q_0$, provided $X\geq N^{1/10k}$, $q_0 \mid q$, $(a,q)=1$,  and $q \leq (q_0Q(\delta))^{3K}$.
\end{lemma}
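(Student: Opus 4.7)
The plan is to derive Lemma \ref{RS} from the standard explicit-formula toolkit for Dirichlet $L$-functions, which is exactly the framework Ruzsa and Sanders refine in their work on Linnik's prime number theorem. I would begin with character orthogonality together with the truncated explicit formula
\begin{equation*}
\psi(X,a,q)=\frac{1}{\phi(q)}\sum_{\chi\bmod q}\bar{\chi}(a)\psi(X,\chi),\qquad \psi(X,\chi)=\mathbf{1}_{\chi=\chi_0}X-\sum_{|\Im\rho_\chi|\leq T}\frac{X^{\rho_\chi}}{\rho_\chi}+O\!\left(\frac{X\log^2(qXT)}{T}\right),
\end{equation*}
where $\rho_\chi$ runs over the nontrivial zeros of $L(s,\chi)$ and $\chi_0$ is the principal character modulo $q$. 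Choosing $T=\exp(c\sqrt{\log X})$ with $c$ taken large compared to $30kK^2c_1$ absorbs the truncation remainder into the target error.

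The analysis then splits via Landau's classical result: for $Q^\ast:=(q_0Q(\delta))^{3K}$, there is an absolute constant $c_2>0$ such that no $L(s,\chi)$ with $\chi$ of conductor at most $Q^\ast$ has a zero in $\Re s>1-c_2/\log(Q^\ast(|\Im s|+2))$, except possibly a single real simple zero $\beta$ of a single $L(s,\chi_1)$ for a primitive real character $\chi_1$ of some conductor $q_0\leq Q^\ast$. A standard dyadic zero-density estimate converts the zero-free region into a bound $X\exp(-c_3\sqrt{\log X/\log Q^\ast})$ for the combined contribution of all non-exceptional zeros; since $\log Q^\ast\ll K^2\log Q(\delta)\ll K^2c_1\sqrt{\log N}\ll (10k)K^2c_1\sqrt{\log X}$ under the hypotheses $Q(\delta)\leq\exp(c_1\sqrt{\log N})$ and $X\geq N^{1/10k}$, this aggregate fits inside $O(X\exp(-30kK^2c_1\sqrt{\log X}))$ once $c_1=c_1(k)$ is taken small enough. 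If no exceptional $\beta\geq 1/2$ exists in the relevant range, the lemma holds trivially with $\rho=1/2$ and $q_0=1$, the inserted $\chi(a)X^{1/2}/(\phi(q)\rho)$ being itself dominated by the error.

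Otherwise set $\rho=\beta$ and take $q_0$ to be the conductor of $\chi_1$. For each $q$ divisible by $q_0$, the unique character $\chi$ modulo $q$ induced from $\chi_1$ inherits the exceptional zero, contributing exactly $-\chi(a)X^\rho/(\phi(q)\rho)$ after orthogonality; Deuring-Heilbronn repulsion simultaneously pushes every other zero of every $L(s,\chi')$ with $\chi'\bmod q\leq Q^\ast$ further from $s=1$ by a factor of order $\log(1/(1-\beta))/\log Q^\ast$, keeping their total contribution inside the same error budget. The quantitative bound $(1-\rho)^{-1}\ll q_0$ is Page's effective lower bound $1-\beta\gg q_0^{-1/2}(\log q_0)^{-2}$.

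The main obstacle is the bookkeeping of constants: tuning $c_1=c_1(k)$ so that the Landau-Page constant, the Deuring-Heilbronn improvement, and the unusually large target exponent $30kK^2$ with $K=2^{10k}$ all combine favorably, uniformly across every $X\geq N^{1/10k}$ and every $q\leq(q_0Q(\delta))^{3K}$. Once the dependence of $c_1$ on $k$ is set correctly, the pieces slot together exactly as in Ruzsa-Sanders.
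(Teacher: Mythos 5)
The paper does not give a self-contained proof of Lemma~\ref{RS}: it explicitly states that the lemma is a purpose-built special case of Proposition~4.7 of Ruzsa--Sanders~\cite{Ruz}, applied with the pair $(Q(\delta)^{10K^2},Q(\delta)^{3K})$, taking $q_0=1$ in the unexceptional case. Your proposal attempts to rebuild the underlying argument from scratch via the truncated explicit formula, the Landau--Page dichotomy, Deuring--Heilbronn repulsion, and Page's lower bound on $1-\beta$. That is indeed the machinery behind the cited proposition, so the structure you lay out (dichotomize on the existence of an exceptional zero, put $\rho=1/2$, $q_0=1$ if there is none, otherwise $\rho=\beta$ and $q_0=\mathrm{cond}(\chi_1)$, use repulsion to control the remaining zeros, use Page for $(1-\rho)^{-1}\ll q_0$) is consistent with what the paper is quoting.

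There is, however, a concrete error in the bound you claim for the combined non-exceptional contribution. You write $X\exp\bigl(-c_3\sqrt{\log X/\log Q^\ast}\bigr)$. In the regime these hypotheses force, $\log Q^\ast\asymp c_1\sqrt{\log N}\asymp c_1\sqrt{\log X}$, so your expression is of order $X\exp\bigl(-c_3c_1^{-1/2}(\log X)^{1/4}\bigr)$ and cannot reach the target $O\bigl(X\exp(-30kK^2c_1\sqrt{\log X})\bigr)$, which shrinks like $\exp(-c\sqrt{\log X})$. The correct shape, coming from the zero-free region $\beta\leq 1-c_0/\log(q(|\gamma|+2))$ fed into the truncated explicit formula, is
$X\exp\bigl(-c_0\log X/\log(Q^\ast T)\bigr)\log^2(Q^\ast XT)+X\log^2(Q^\ast XT)/T$;
choosing $T\asymp\exp(\sqrt{\log X})$ and using $\log Q^\ast\ll c_1\sqrt{\log X}$ gives $X\exp(-c'\sqrt{\log X})$ with $c'$ an absolute constant (times a mild factor). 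One then wins precisely because $c_1$ is allowed to be small compared with $1/(30kK^2)$, making $c'\geq 30kK^2c_1$. Also, the phrase ``dyadic zero-density estimate'' is a misattribution: what does the work here is the classical zero-free region together with the zero-counting function $N(T,\chi)\ll T\log(qT)$, not a zero-density theorem in the Huxley/Ingham sense. If you replace your exponent by the correct $X\exp(-c'\sqrt{\log X})$ form and make this substitution in the subsequent constant-chasing, the argument closes and matches the Ruzsa--Sanders proposition the paper invokes.
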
  

\noindent Lemma \ref{RS} is a purpose-built special case of Proposition 4.7 of \cite{Ruz}, which in the language of that paper can be deduced by considering the pair $(Q(\delta)^{10K^2},Q(\delta)^{3K})$, where $q_0$ is the modulus of the exceptional Dirichlet character if the pair is exceptional and $q_0=1$ if the pair is unexceptional. 

\noindent  As remarked in the proof of Proposition 5.3 of \cite{Ruz}, the asymptotic in Lemma \ref{RS} implies that under the hypotheses we have 
\begin{equation}\label{psiB} \psi(X,a,q) \gg \frac{X}{\phi(q)}-\frac{X^{\rho}}{\phi(q)\rho} \geq (1-\rho)X/\phi(q) \gg \frac{X}{q_0\phi(q)}.
\end{equation}

\subsection{A Uniform Estimate on $\R_d$}We obtain Theorem \ref{pint} as a consequence of the following, stronger result, which says that the number of solutions to $a-a'=h(p)>0$ with $a,a' \in A$, $p\in \P$, has the correct order of magnitude. In addition, we obtain this estimate uniformly in $d$ for a range of auxiliary polynomials $h_d$, which serves as the primary input required to apply the techniques of \cite{Le} and conclude Theorem \ref{rel}.
\begin{theorem} \label{pint2} There exists a constant $C$ depending only on $h$, $\epsilon$, and $C_0$ such that 
\begin{equation*} \R_d(A) \geq \exp (-C\delta^{-(k+\epsilon-1)} ) NM_d
\end{equation*} for all $d \leq \max \{\log N, Q(\delta) \}$, provided $\delta \geq C(\log N)^{-1/2(k+\epsilon-1)}$.
\end{theorem}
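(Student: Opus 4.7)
The plan is to prove Theorem \ref{pint2} by a Fourier-analytic density increment argument, iterating over the family of auxiliary polynomials $h_d$ introduced in Section \ref{defsec}. I would first write $\R_d(A)$ via Fourier inversion on $\Z$ as
\begin{equation*}
\R_d(A) = \int_0^1 |\widehat{1_A}(\alpha)|^2\, S_d(-\alpha)\, d\alpha, \qquad S_d(\alpha) = \sum_{y\in H_d}\nu_d(y)\, e^{2\pi i h_d(y)\alpha},
\end{equation*}
and then perform a Hardy--Littlewood dissection of $\T$ into major arcs centered at reduced fractions $a/q$ with $q\le Q(\delta)^{3K}$ and $q_0\mid q$, and complementary minor arcs. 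The goal is to show that the major arcs produce a main term of size $\gg NM_d$ with an acceptable constant, and that the minor arcs are absorbable unless $\widehat{1_A}$ is sharply concentrated near a low-denominator rational.

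For the major arcs, I would expand $S_d$ near $a/q$ by sorting $y\in H_d$ into residue classes modulo $q$: $h_d(y)a/q$ then depends only on $y\bmod q$, and the inner prime-weighted sum over each class is evaluated via Lemma \ref{SW} for small $q$ and via Lemma \ref{RS} once $q$ grows. The $\P$-intersective condition, encoded in the choice of $r_d$ with $(r_d,d)=1$, is exactly what guarantees that the classes contributing are nonempty and that $\nu_d$ does not vanish systematically. The exceptional-zero term $\chi(a)X^{\rho}/\phi(q)\rho$ is absorbed by the lower bound (\ref{psiB}) combined with $(1-\rho)^{-1}\ll q_0$, so the surviving major-arc main term factorizes as $\gg NM_d$ times a local density depending only on $h$, with the minor singular-series contributions controlled by the usual Gauss sum cancellation.

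On the minor arcs I would pair Parseval with a Weyl-type estimate for $S_d$, exploiting both the polynomial factor $h_d$ and the prime support of $\nu_d$, to bound their contribution by $o(\delta^2 NM_d)$ once $N$ is sufficiently large. If $\R_d(A)$ then fails to reach the claimed size, a standard large-values argument produces a rational $a/q$ with $q\ll Q(\delta)^{O(1)}$ and a long arithmetic progression $P$ of common difference divisible by $d$ on which the density of $A$ exceeds $\delta + c\delta^{k+\epsilon-1}$. Pulling back through the affine map $x\mapsto r_{d'} + d'x$ for an appropriate multiple $d'$ of $d$ and the new denominator yields a set $A'\subseteq[1,N']$ of increased density together with a new auxiliary polynomial $h_{d'}$, and the iteration continues.

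The main obstacle is bookkeeping: each density increment step multiplies the working modulus by at most $Q(\delta)^{O(1)}$ and shrinks the ambient interval by the same factor, so one must verify that after at most $O(\delta^{-(k+\epsilon-1)})$ steps the modulus is still $\le \max\{\log N, Q(\delta)\}$ and the interval remains long enough for Lemma \ref{SW} or Lemma \ref{RS} to apply. This balance is exactly what forces the choice $Q(\delta)=\exp(C_0\delta^{-(k+\epsilon-1)})$ and yields the $\exp(-C\delta^{-(k+\epsilon-1)})$ factor in the conclusion. A second subtlety is keeping the constants independent of $d$ throughout the iteration, since uniformity in $d$ is precisely what is needed downstream for Theorem \ref{rel}; this requires that all Weyl and singular-series bounds be expressed in terms of the leading coefficient $b_d$ of $h_d$ rather than the coefficients themselves, using the observation from \cite{Lucier} that the non-leading coefficients of $h_d$ are dominated by $b_d$ up to constants depending only on $h$.
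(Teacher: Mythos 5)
Your outline follows essentially the same strategy the paper uses: a Fourier-analytic density increment iterated over the family of auxiliary polynomials $h_d$, with Lemma \ref{SW} and Lemma \ref{RS} supplying the major-arc asymptotics for the prime-weighted exponential sums, a Weyl-type bound (Lemma \ref{min}) plus Parseval handling the minor arcs, and Proposition \ref{scale}-type pullbacks through $x\mapsto r_{qd}+qdx$ carrying the deficiency forward while updating the auxiliary polynomial. You also correctly identify the two delicate points: the bookkeeping that forces $Q(\delta)=\exp(C_0\delta^{-(k+\epsilon-1)})$ and hence the $\exp(-C\delta^{-(k+\epsilon-1)})$ loss, and the need for bounds uniform in $d$, achieved via Lucier's control of $\textnormal{cont}(h_d)$ and the dominance of the leading coefficient.

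The only substantive difference is organizational rather than mathematical: the paper isolates all of the Fourier analysis inside Lemma \ref{mainit} (in turn Lemmas \ref{L2} and \ref{dinc}), so that the proof of Theorem \ref{pint2} itself is a short iteration-and-contradiction argument, whereas you narrate the whole chain in one pass. Two small technical remarks. First, the paper works with the balance function $f_B=1_B-\sigma 1_{[1,L]}$ rather than $1_A$ directly, which zeroes out the contribution at $\alpha=0$ and makes the dichotomy ``either $\R_d$ is large or there is $L^2$ concentration on a nontrivial major arc'' immediate; with $|\widehat{1_A}|^2$ you must separately isolate the $\alpha=0$ term and argue that the remaining major-arc mass is either small (giving $\R_d(A)\gg\delta^2 N\Psi_d$) or concentrated (giving the increment). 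Second, in your claimed main term ``$\gg NM_d$'' the correct normalization is $\gg\delta^2 N\Psi_d$ (with $\Psi_d\gg M_d/q_0$), and these $\delta$-powers and $q_0$-losses accumulate along the iteration to produce the stated $\exp(-C\delta^{-(k+\epsilon-1)})NM_d$; the paper tracks this explicitly via (\ref{Ndeld}) and (\ref{psiB2}). Neither point is a gap, just bookkeeping that your sketch should make precise.
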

 
\section{Main Iteration Lemma: Deducing Theorem \ref{pint2}}
We now make the assumption that 
\begin{equation} \label{assmp} Q(\delta) \leq e^{c_1\sqrt{\log N}} \end{equation} 
for a sufficiently small constant $c_1>0$, which is implied by the condition $\delta \geq C_0(\log N)^{-1/2(k+\epsilon-1)}/c_1$, and we fix $\rho$ and $q_0$ yielded by Lemma \ref{RS}. Also, we set $\gamma=k+\epsilon/2$ and for $d,L\in \N$ we define
\begin{equation*}\Psi_d=\Psi_d(L)=\phi(d)\psi(dM_d,r_d,d)/d,
\end{equation*}
noting by (\ref{psiB}) that for appropriate $d$ and $L$ we have
\begin{equation}\label{psiB2} \Psi_d \gg (1-\rho)M_d \gg M_d/q_0. 
\end{equation}
We deduce Theorem \ref{pint2}  from the following iteration lemma, which states that a set which is deficient in the desired arithmetic structure spawns a new, significantly denser subset of a slightly smaller interval with an inherited deficiency in the structure associated to an appropriate auxiliary polynomial.

\begin{lemma} \label{mainit} Suppose $B\subseteq [1,L]$, $|B|/L=\sigma\geq\delta$, and $L\geq \sqrt{N}$. If $q_0 \mid d$, $d/q_0\leq \max \{ \log N, Q(\delta)\}^2$, and
\begin{equation*} \R_d(B) \leq \sigma^2L\Psi_d/8, \end{equation*}
then there exists $q\ll \sigma^{-\gamma}$ and $B'\subseteq [1,L']$ with $L' \gg \sigma^{\gamma(k+1)}L,$ $\R_{qd}(B')\leq \R_d(B),$ and \begin{equation*} |B'|\geq (\sigma+c\sigma^{\gamma})L'. \end{equation*}
\end{lemma}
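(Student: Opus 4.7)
My plan is to run a Fourier-analytic density-increment argument in the style of Lucier \cite{Lucier} and Lê \cite{Le}, but with exponential sums over primes (weighted by $\nu_d$) in place of sums over integers. The starting point is the identity
\[
\R_d(B) \;=\; \int_0^1 |\widehat{1_B}(\alpha)|^2 \, S_d(-\alpha)\, d\alpha, \qquad S_d(\alpha) \;=\; \sum_{y \in H_d} \nu_d(y)\, e(h_d(y)\alpha),
\]
obtained by Fourier inversion of the inner indicator. The $\alpha = 0$ contribution produces the ``random'' count $\sigma^2 L \Psi_d$ (using $S_d(0) \approx \Psi_d$), so the hypothesis $\R_d(B) \le \sigma^2 L \Psi_d/8$ forces a substantial deficit that must be absorbed by a large non-principal correlation of $\widehat{1_B}$ against $S_d$.

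The main analytic step is to estimate $S_d$ on the Farey-style minor arcs complementary to $\mathfrak{M} = \bigcup_{q \le Q_0}\bigcup_{(a,q)=1} \{\alpha : |\alpha - a/q| \le Q_0/L\}$ with threshold $Q_0 \asymp \sigma^{-\gamma}$, where $\gamma = k + \epsilon/2$. Here I would combine a Weyl-type estimate for the polynomial $h_d$ with a Vaughan-identity expansion of the prime weight $\nu_d$, invoking Lemma \ref{SW} (and Lemma \ref{RS} in the larger range of $d$) to handle the constant-in-$\alpha$ contribution arising from that expansion. The goal is a bound of the form $|S_d(\alpha)| \ll \sigma^\eta \Psi_d$ with $\eta$ strictly greater than $1$, uniform in $d$ up to the stated range $d/q_0 \le \max(\log N, Q(\delta))^2$; by Parseval this renders the minor-arc contribution negligible compared with $\sigma^2 L \Psi_d$. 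I expect this uniform minor-arc estimate---reconciling the polynomial Weyl savings with the prime-weight decomposition across the full range of permissible $d$---to be the principal technical obstacle, and the exponent $\gamma = k + \epsilon/2$ is precisely tuned to make this balance succeed.

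The forced deficit must therefore be absorbed on the major arcs, pinning the non-trivial Fourier mass of $1_B$ near some rational $a/q$ with $q \ll \sigma^{-\gamma}$. A standard pigeonhole-on-progressions step then converts $|\widehat{1_B}(a/q)| \gg \sigma^{1+\gamma}L$ into an arithmetic progression $P$ of common difference $qd$ and length $L' \gg \sigma^{\gamma(k+1)} L$ on which $|B \cap P|/|P| \ge \sigma + c\sigma^\gamma$; the factor of $d$ in the step size is required so that the rescaled interval carries the next auxiliary polynomial $h_{qd}$ correctly. Defining $B' = \{(b - \min P)/(qd) : b \in B \cap P\} \subseteq [1, L']$ then delivers the density increment.

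The remaining inheritance claim $\R_{qd}(B') \le \R_d(B)$ hinges on the compatibility of the CRT-constructed roots, namely $r_{qd} \equiv r_d \pmod d$. This ensures that every prime of the form $r_{qd} + qd\,y$ can be written as $r_d + d\,y'$ for a unique $y'$, and the resulting identity between $h_{qd}(y)$ and $h_d(y')$ (up to the $\lambda$-factor, which is absorbed by the scaling of $P$) shows that each $h_{qd}$-configuration counted in $\R_{qd}(B')$ pulls back to a genuine $h_d$-configuration counted in $\R_d(B)$. This completes the iteration step.
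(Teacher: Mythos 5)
Your high-level plan matches the paper's: Fourier identity for $\R_d(B)$, major/minor arc decomposition of the weighted exponential sum over primes, extraction of a dominant rational $a/q$, density increment on a progression, and inheritance of the deficiency via the CRT compatibility $r_{qd} \equiv r_d \pmod d$. But there is one concrete error in the rescaling step, and two secondary gaps worth flagging.

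The central error is the step size of the progression. You propose to pass to a progression of common difference $qd$ and to rescale $B'$ by dividing by $qd$. The correct common difference is $\lambda(q)$, where $\lambda$ is the completely multiplicative function determined by the multiplicities of the fixed $p$-adic roots. The inheritance $\R_{qd}(B') \le \R_d(B)$ depends precisely on the identity $\lambda(q) h_{qd}(n) = h_d(m+qn)$ (with $r_{qd} = r_d + md$), so that if $\ell - \ell' = h_{qd}(n)$ for $\ell, \ell' \in B'$ and $B' = \{\ell : x + \ell\lambda(q) \in B\}$, then $(x+\ell\lambda(q)) - (x+\ell'\lambda(q)) = h_d(m+qn)$ is a genuine configuration in $B$. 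Dividing by $qd$ instead does not produce this cancellation: $qd \ne \lambda(q)$ in general, and there is no reason the factor of $d$ should appear in the step at all (the index on the polynomial advances to $qd$, but the geometric rescaling is by $\lambda(q)$). Note also that the stated bound $L' \gg \sigma^{\gamma(k+1)} L$ is exactly what falls out of $L' \gg \sigma^\gamma L / \lambda(q)$ with $\lambda(q) \le q^k \ll \sigma^{-\gamma k}$; with step $qd$ your length would instead carry an extra factor of $d^{-1}$, which could be as small as $\max\{\log N, Q(\delta)\}^{-2}$, destroying the claimed lower bound.

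Two smaller points. First, working with $|\widehat{1_B}|^2$ rather than the balance function $\widehat{f_B}$ and declaring that ``the $\alpha=0$ contribution produces $\sigma^2 L\Psi_d$'' hides a boundary issue: the main-term computation is only clean when $B$ is not concentrated near the endpoints of $[1,L]$. The paper's proof dispatches this by first checking $|B \cap (L/9, 8L/9)| \ge 3\sigma L/4$, with the complementary case giving an immediate density increment on an end-interval; your outline needs an analogous case split (or some other device) before the Fourier argument is launched. Second, you ask for a minor-arc saving $|S_d(\alpha)| \ll \sigma^\eta \Psi_d$ with $\eta$ strictly greater than $1$; in fact $\eta = 1$ (with a suitably small constant, i.e. $|S_d(\alpha)| \le \sigma\Psi_d/32$) already suffices once one compares against the deficit $\sigma^2 L\Psi_d/8$ via Parseval, and the paper proves exactly this with major-arc cutoff $\eta^{-\gamma} = (c_2\sigma)^{-\gamma}$. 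Demanding $\eta > 1$ is both more than you need and more than the Weyl/Vinogradov machinery delivers at that cutoff.
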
 

The following proposition exhibits the aforementioned inheritance of deficiency in arithmetic structure, and is essential to the deduction of Theorem \ref{pint2} from Lemma \ref{mainit} as well as the proof of Lemma \ref{mainit} itself.

\begin{proposition} \label{scale} If $B \subseteq [1,L]$ and $B' \subseteq \{\ell \in [1,L'] : x+\ell \lambda(q)\in B \}$ for some $x\in\Z$, $q\in\N$, and $L'\leq L/\lambda(q)$, then for any $d\in\N$, $$\R_{qd}(B')\leq \R_d(B).$$ 
\end{proposition}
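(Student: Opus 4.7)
The plan is to define an injection from the terms contributing to $\R_{qd}(B')$ into those contributing to $\R_d(B)$ and then verify that the weight of each image term dominates that of its preimage. The whole argument rests on the fact that $r_d$ and $r_{qd}$ are obtained by reducing the same fixed $p$-adic roots $z_p$ via the Chinese Remainder Theorem, which forces a tight compatibility between the auxiliary polynomials $h_d$ and $h_{qd}$.

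First I would verify the congruence $r_{qd}\equiv r_d\pmod{d}$; this is immediate from the construction, since every prime power dividing $d$ also divides $qd$ to at least the same order, so the reductions of the $z_p$'s that determine $r_{qd}$ agree modulo $d$ with those that determine $r_d$. Writing $r_{qd}=r_d+dt$ with $t\in\Z$, the multiplicativity $\lambda(qd)=\lambda(q)\lambda(d)$ yields the key polynomial identity
\begin{equation*}
h_d(t+qy) \;=\; \frac{h(r_d+d(t+qy))}{\lambda(d)} \;=\; \frac{h(r_{qd}+qdy)}{\lambda(d)} \;=\; \lambda(q)\,h_{qd}(y).
\end{equation*}
Inspecting the ranges $r_d\in(-d,0]$ and $r_{qd}\in(-qd,0]$ gives $t\geq 1-q$, so $t+qy\geq 1$ whenever $y\geq 1$.

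With this in hand, the natural change of variables is $x'\mapsto X:=x+x'\lambda(q)$ and $y\mapsto Y:=t+qy$, which is clearly injective and should send an index $(x',y)$ contributing to $\R_{qd}(B')$ to an index $(X,Y)$ contributing to $\R_d(B)$. I would then verify pointwise domination of each factor. The translate satisfies $X+h_d(Y)=x+(x'+h_{qd}(y))\lambda(q)$, so by the defining property of $B'$ both $1_{B'}(x')\leq 1_B(X)$ and $1_{B'}(x'+h_{qd}(y))\leq 1_B(X+h_d(Y))$ hold. The identity $r_d+dY=r_{qd}+qdy$ shows that the conditions $y\in\Lambda_{qd}$ and $Y\in\Lambda_d$ coincide and that the logarithmic factors agree, while $\phi(qd)/(qd)\leq\phi(d)/d$ (from the Euler product) gives $\nu_{qd}(y)\leq \nu_d(Y)$. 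Finally $Y\in H_d$, since $h_d(Y)=\lambda(q)h_{qd}(y)$ is positive (as $y\in H_{qd}$) and bounded by $\lambda(q)L'/s\leq L/s$ by the hypothesis $L'\leq L/\lambda(q)$. Summing these pointwise inequalities over the image yields $\R_{qd}(B')\leq \R_d(B)$.

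The only potentially delicate step is establishing the congruence $r_{qd}\equiv r_d\pmod{d}$ cleanly from the construction of the $r_d$'s and verifying that $t+qy\in\N$; beyond that, the argument is essentially bookkeeping with the definitions.
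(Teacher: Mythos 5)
Your proof is correct and follows essentially the same route as the paper's: both hinge on the congruence $r_{qd}\equiv r_d \pmod d$ and the resulting identity $h_d(t+qy)=\lambda(q)h_{qd}(y)$, which show that each configuration counted by $\R_{qd}(B')$ rescales to one counted by $\R_d(B)$. Your explicit verification that the weight dominance $\nu_{qd}(y)\leq\nu_d(Y)$ holds (via the equality of the logarithms and indicators and the monotonicity of $\phi(n)/n$) is a point the paper's proof leaves implicit with ``the result follows,'' so your write-up is slightly more complete.
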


\begin{proof} Suppose $B\subseteq [1,L]$, $B' \subseteq \{\ell \in [1,L'] : x+\ell \lambda(q)\in B \},$ $L'\leq L/\lambda(q)$, and \begin{equation*} L'/s>\ell-\ell'=h_{qd}(n)=\frac{h(r_{qd}+qdn)}{\lambda(q)\lambda(d)}>0\end{equation*} for $\ell,\ell' \in B'$, $n \in \Lambda_{qd}$. Recalling that $r_{qd} \equiv r_d$ mod $d$, there is an integer $m$ such that $r_{qd}=r_d+md$,   so \begin{equation*}\ell-\ell' = \frac{h(r_d+d(m+qn))}{\lambda(q)\lambda(d)}=\frac{h_d(m+qn)}{\lambda(q)}, \end{equation*} and therefore $$0< h_d(m+qn)=\lambda(q)\ell-\lambda(q)\ell'=(x+\lambda(q)\ell)-(x+\lambda(q)\ell')<\lambda(q)L'/s \leq L/s.$$ Moreover, we know that $r_d+d(m+qn)=r_{qd}+qdn \in \P$, so $m+qn \in \Lambda_d$, and the result follows.
\end{proof}

\subsection{Proof of Theorem \ref{pint2}} Fixing $d\leq \max\{\log N, Q(\delta)\}$ and partitioning $[1,N]$ into arithmetic progressions of step size $\lambda(q_0)$ and length between $N/2\lambda(q_0)$ and $N/\lambda(q_0)$, the pigeonhole principle guarantees the existence of an arithmetic progression $P=\{x+\ell\lambda(q_0): 1\leq \ell \leq N_0\}$ such that $N/2\lambda(q_0) \leq N_0 \leq N/\lambda(q_0)$ and $|A\cap P|/N_0 \geq \delta$.

\noindent This allows us to define $A_0 \subseteq [1,N_0]$ by $A_0=\{\ell \in [1,N_0]: x+\ell\lambda(q_0) \in A \}$, which consequently satisfies \begin{equation*}|A_0|/N_0=\delta_0\geq \delta,  \quad N_0\geq N/Q(\delta)^{4kK}, \quad \text{and} \quad \R_{q_0d}(A_0)\leq \R_d(A), \end{equation*} where the last fact follows from Proposition \ref{scale}.

\noindent We then iteratively apply Lemma \ref{mainit}, which yields, for each $m$, a set $A_m \subseteq [1,N_m]$ with $|A_m|=\delta_mN_m$ and 
\begin{equation}\label{RAPh} \R_{d_m}(A_m) \leq \R_d(A) \end{equation}
satisfying
\begin{equation}\label{Ndeld} N_m \geq (c\delta)^{Cm}N_0, \quad \delta_m \geq \delta_{m-1}+c\delta_{m-1}^{\gamma}, \quad q_0 \mid d_m, \quad \text{and} \quad d_m/q_0 \leq (c\delta)^{-Cm} d
\end{equation} 
as long as 
\begin{equation} \label{Nmdel} N_m \geq \sqrt{N}, \quad d_m/q_0\leq \max\{\log N, Q(\delta)\}^2,
\end{equation}
and 
\begin{equation}\label{RAm} \R_{d_m}(A_m) \leq \delta_m^2N_m\Psi_{d_m}/8.
\end{equation}
By (\ref{Ndeld}), we see that the density $\delta_m$ would surpass $1$, and hence (\ref{Nmdel}) or (\ref{RAm}) must fail, with \begin{equation} \label{mbound} m=C\delta^{-(\gamma-1)}. \end{equation}

\noindent However,  if $C_0$ is sufficently large then (\ref{mbound}) implies $(c\delta)^{-Cm} \leq Q(\delta)$, hence $N_m\geq N/Q(\delta) \geq \sqrt{N}$ and $d_m/q_0 \leq Q(\delta)d \leq \max\{\log N, Q(\delta)\}^2$, so (\ref{Nmdel}) holds. Further, we see by (\ref{psiB2}) and (\ref{Ndeld}) that
\begin{equation*} \delta_m^2N_m\Psi_{d_m} \geq (c\delta)^{Cm}N_0 M_d / q_0 \geq \exp(-C\delta^{-(k+\epsilon-1)})NM_d,
\end{equation*}
so if $\R_d(A) \leq \exp(-C\delta^{-(k+\epsilon-1)})NM_d$ for a sufficiently large constant $C$, then by (\ref{RAPh}) we have that (\ref{RAm}) also holds. This yields a contradiction, and the theorem follows. 
\begin{equation*}
\end{equation*}

\section{Density Increment Strategy: Deducing Lemma \ref{mainit}}

\subsection{Fourier Analysis on $\Z$} We embed our finite sets in $\Z$, on which we utilize the discrete Fourier transform. Specifically, for a function $F: \Z \to \C$ with finite support, we define $\widehat{F}: \T \to \C$, where $\T$ denotes the circle  parameterized by the interval $[0,1]$ with $0$ and $1$ identified, by \begin{equation*} \widehat{F}(\alpha) = \sum_{x \in \Z} F(x)e^{-2 \pi ix\alpha}. \end{equation*}

\noindent Given $L\in \N$ and a set $B\subseteq [1,L]$ with $|B|=\sigma L$, we examine the Fourier analytic behavior of $B$ by considering the \textit{balance function}, $f_B$, defined by
\begin{equation*} f_B=1_B-\sigma 1_{[1,L]}.\end{equation*} 
\subsection{The Circle Method} We analyze the behavior of $\widehat{f_B}$ using the Hardy-Littlewood circle method, \quad \quad decomposing the frequency space into two pieces: the points on the circle which are close to rationals with small denominator, and those which are not.

\begin{definition}Given $L\in \N$ and $\eta>0$, we define, for each $q\in \N$ and $a\in[1,q]$,
\begin{equation*} \mathbf{M}_{a/q}=\mathbf{M}_{a/q}(L,\eta)=\left\{\alpha \in \T : |\alpha-\frac{a}{q} | < \frac{1}{\eta^{\gamma}L} \right\} \quad \text{and} \quad \mathbf{M}_q = \bigcup_{(a,q)=1} \mathbf{M}_{a/q}.\end{equation*} 
We then define $\mathfrak{M}$, the \textit{major arcs}, by 
\begin{equation*}\mathfrak{M}=\bigcup_{q=1}^{\eta^{-\gamma}} \mathbf{M}_q, \end{equation*} and $\mathfrak{m}$, the \textit{minor arcs}, by \begin{equation*} \mathfrak{m}= \T \setminus \mathfrak{M}. \end{equation*}
\end{definition}

\subsection{$L^2$ Concentration and Density Increment Lemmas}
As usual, the philosophy behind the argument is that a deficiency in the desired arithmetic structure from a set $B$ represents  nonrandom behavior, which should be detected in the Fourier analytic behavior of $B$. Specifically,  we follow the approach of Lyall and Magyar  \cite{LM} to locate one small denominator $q$ such that $\widehat{f_B}$ has $L^2$ concentration around rationals with denominator $q$, then use that information to find a long arithmetic progression on which $B$ has increased density. 

\begin{lemma}[$L^2$ Concentration] \label{L2} Suppose $B \subseteq [1,L]$, $|B|/L=\sigma \geq \delta$, and $L \geq \sqrt{N}$, and let $\eta=c_2\sigma$ for a sufficiently small constant $c_2>0$. Suppose further that 
\begin{equation*} q_0\mid d, \quad d/q_0 \leq \max\{\log N, Q(\delta)\}^2, \quad \text{and} \quad\R_d(B) \leq \sigma^2L\Psi_d/8. \end{equation*}
If  $|B \cap (L/9, 8L/9)| \geq 3\sigma L/4,$ then there exists $q \leq \eta^{-\gamma}$ such that 
\begin{equation*}  \int_{\mathbf{M}_q} |\widehat{f_B}(\alpha)|^2 \textnormal{d}\alpha \gg \sigma^{\gamma+1}L.\end{equation*}
\end{lemma}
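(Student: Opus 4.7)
The plan is to use Plancherel to write $\R_d(B)$ as a Fourier-side integral against the prime polynomial exponential sum $S_d(\alpha) = \sum_{y \in H_d} \nu_d(y) e^{2\pi i h_d(y)\alpha}$, namely
$$\R_d(B) = \int_\T |\widehat{1_B}(\alpha)|^2 \overline{S_d(\alpha)}\,d\alpha,$$
and then convert the deficiency hypothesis on $\R_d(B)$ into an $L^2$ concentration of $\widehat{f_B}$ on major arcs. Decomposing $1_B = \sigma 1_{[1,L]} + f_B$, the main term is $\sigma^2 \sum_{y \in H_d} \nu_d(y)(L - h_d(y)) \geq (1 - 1/s)\sigma^2 L \Psi_d$, using $\sum_{y \in H_d} \nu_d(y) = \Psi_d + o(\Psi_d)$ (from the definitions, together with the control on $d$) and $h_d(y) < L/s$. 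The two cross terms combine in physical space to $\sigma \sum_y \nu_d(y)\bigl[2\sigma h_d(y) - |B\cap[1,h_d(y)]| - |B\cap(L-h_d(y),L]|\bigr]$, and the spread hypothesis $|B \cap (L/9, 8L/9)| \geq 3\sigma L/4$ --- which forces $|B \cap ([1, L/9]\cup(8L/9, L])| \leq \sigma L/4$ --- together with $h_d(y) < L/s < L/9$ (this is where $s = 2^k+6 \geq 10$ is used), bounds this cross term in absolute value by a small multiple of $\sigma^2 L \Psi_d$. Combined with $\R_d(B) \leq \sigma^2 L \Psi_d / 8$, this forces
$$\left|\int_\T |\widehat{f_B}(\alpha)|^2 S_d(\alpha)\,d\alpha\right| \gg \sigma^2 L \Psi_d.$$

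Next, with $\eta = c_2 \sigma$, I split $\T = \mathfrak{M} \cup \mathfrak{m}$ and estimate each piece. On $\mathfrak{m}$ I invoke a Vinogradov-type bound for the prime polynomial exponential sum $S_d$, producing $\sup_{\mathfrak{m}}|S_d| \ll \sigma \Psi_d$; the degree $k \geq 2$ assumption, the largeness of $M_d$ coming from $L \geq \sqrt{N}$ together with the mild growth $d/q_0 \leq \max\{\log N, Q(\delta)\}^2$ (which keeps the leading coefficient $b_d$ of $h_d$ under control), and the prime distribution input from Lemma \ref{RS} all feed into the standard Dirichlet-approximation-plus-differencing argument, with the exponent $\gamma = k + \epsilon/2$ in the width of the major arcs calibrated precisely so that the Vinogradov saving beats the trivial bound by the required power of $\sigma$. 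Combined with Parseval ($\int|\widehat{f_B}|^2 \leq |B| = \sigma L$), the minor-arc contribution becomes negligible, so $\int_{\mathfrak{M}}|\widehat{f_B}|^2|S_d|\,d\alpha \gg \sigma^2 L \Psi_d$. On each $\mathbf{M}_q$, inserting the Ruzsa--Sanders asymptotic for $\psi(X,a,q)$ into $S_d$ and performing partial summation yields the standard major-arc bound $\sup_{\mathbf{M}_q}|S_d| \ll \Psi_d/\phi(q)$, giving
$$\sum_{q \leq \eta^{-\gamma}} \frac{1}{\phi(q)} \int_{\mathbf{M}_q}|\widehat{f_B}(\alpha)|^2\,d\alpha \gg \sigma^2 L.$$

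Finally, were $\int_{\mathbf{M}_q}|\widehat{f_B}|^2 \ll \sigma^{\gamma + 1}L$ uniformly in $q \leq \eta^{-\gamma}$, the left-hand side above would be at most a constant times $\sigma^{\gamma+1} L \sum_{q \leq \sigma^{-\gamma}} 1/\phi(q) \ll \sigma^{\gamma+1}L \log(\sigma^{-1})$, contradicting the displayed lower bound because $\gamma \geq 2 + \epsilon/2$ makes $\sigma^{\gamma - 1}\log(\sigma^{-1}) \to 0$ as $\sigma \to 0$. Pigeonholing therefore produces the required single $q \leq \eta^{-\gamma}$ with $\int_{\mathbf{M}_q}|\widehat{f_B}|^2 \gg \sigma^{\gamma + 1}L$. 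The main obstacle will be the Vinogradov-style minor-arc estimate for $S_d$: this is an exponential sum over primes in a residue class weighted against a polynomial $h_d$ whose coefficients depend (and grow) with $d$, and careful uniform bookkeeping --- coupling the Weyl-differencing inequalities to Lemma \ref{RS} and using the bounds on $d$, $b_d$, and $M_d$ given by our hypotheses --- is required to turn the Vinogradov saving into the clean power of $\sigma$ stated above.
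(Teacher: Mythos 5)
Your overall architecture matches the paper's: expand $\R_d(B)$ using $1_B = \sigma 1_{[1,L]} + f_B$, use the spread hypothesis to control the cross terms, pass to $\int|\widehat{f_B}|^2|S_d|\gg\sigma^2 L\Psi_d$, then split into major and minor arcs and pigeonhole. The cross-term computation is correct, and the minor-arc estimate you invoke is exactly the paper's (\ref{Smin}). The problem is your major-arc estimate.

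You claim $\sup_{\mathbf{M}_q}|S_d| \ll \Psi_d/\phi(q)$, obtained by ``inserting the Ruzsa--Sanders asymptotic and performing partial summation.'' This is the bound one gets for a \emph{linear} exponential sum over primes (shifted primes), where the arithmetic factor is a Ramanujan-type sum of size $O(1)$. Here $h_d$ has degree $k\geq 2$, and the arithmetic factor emerging from the major-arc asymptotic is the complete Gauss-type sum
\begin{equation*}
G(a,q) = \sum_{\substack{\ell=0 \\ (r_d+d\ell,q)=1}}^{q-1} e^{2\pi i h_d(\ell)a/q},
\end{equation*}
which generically has size comparable to $q^{1-1/k}$, not $O(1)$. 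The correct major-arc bound, which is what the paper proves via Lemmas \ref{asym}, \ref{gs}, and Corollary \ref{Gaq}, is
\begin{equation*}
|S_d(\alpha)| \ll \frac{|G(a,q)|}{\phi(q)}\,\Psi_d \ll \frac{q^{1-1/k}}{\phi(q)}\,\Psi_d \ll q^{-1/\gamma}\Psi_d \quad \text{for } \alpha\in\mathbf{M}_q,
\end{equation*}
where $1/\gamma = 1/(k+\epsilon/2) < 1/k$ absorbs the $\phi(q)/q$ loss. Your bound $\Psi_d/\phi(q)$ would require $|G(a,q)|\ll 1$, which is false for degree $\geq 2$, so the step is a genuine gap, not just imprecision.

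This also changes the pigeonhole endgame. With the corrected estimate one gets
\begin{equation*}
\sigma^2 L \ll \sum_{q\leq\eta^{-\gamma}} q^{-1/\gamma}\int_{\mathbf{M}_q}|\widehat{f_B}|^2 \ll \Big(\sum_{q\leq\eta^{-\gamma}} q^{-1/\gamma}\Big)\max_q\int_{\mathbf{M}_q}|\widehat{f_B}|^2 \ll \eta^{1-\gamma}\max_q\int_{\mathbf{M}_q}|\widehat{f_B}|^2,
\end{equation*}
since $\sum_{q\leq Q}q^{-1/\gamma}\ll Q^{1-1/\gamma}$, and this yields precisely $\max_q\int_{\mathbf{M}_q}|\widehat{f_B}|^2\gg\sigma^{\gamma+1}L$. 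Your $\sum 1/\phi(q)\ll\log(\sigma^{-1})$ version would have given a strictly stronger conclusion, which should have been a warning sign that something was free; the Gauss sum is the cost you overlooked. Once this is repaired, your proposal coincides with the paper's proof.
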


\noindent We now invoke a variation of the usual $L^2$ density increment. Specifically, we quote a result which follows from Proposition 7.2  of \cite{Ruz}.

\begin{lemma}[Density Increment] \label{dinc} Suppose $B \subseteq [1,L]$ with $|B|=\sigma L$ and let $\eta=c_2\sigma$. If  \begin{equation*}  \int_{\mathbf{M}_q} |\widehat{f_B}(\alpha)|^2 \textnormal{d}\alpha \geq \omega \sigma^2L,\end{equation*} 
then there exists an arithmetic progression 
\begin{equation*} P=\{x+\ell \lambda(q) : 1\leq \ell \leq L'\} \end{equation*} with 
\begin{equation*} L/\lambda(q) \geq L' \gg  \min\{\eta^{\gamma},\omega\sigma\}L/\lambda(q) \quad \text{and} \quad |A \cap P|/L' \geq \sigma + \omega\sigma/4. \end{equation*}
\end{lemma}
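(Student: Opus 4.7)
The plan is to execute a standard $L^2$ density increment, with the mild twist that the output progression has common difference $\lambda(q)$ rather than $q$. This is legitimate because $q\mid\lambda(q)$, so $\mathbf{M}_q$ is contained in the family of major arcs with modulus $\lambda(q)$, and indicator functions of progressions of step $\lambda(q)$ have Fourier transforms concentrated on that family.

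Fix $M\asymp\min\{\eta^{\gamma},\omega\sigma\}L/\lambda(q)$ and consider the template progression $P=\{\lambda(q),2\lambda(q),\dots,M\lambda(q)\}$ together with its translates $x+P$ for $x\in\Z$. The Fourier transform $\widehat{1_P}$ is, up to a phase, a Dirichlet kernel in $\lambda(q)\alpha$, of magnitude $\asymp M$ on the band $\{|\alpha-a/\lambda(q)|\leq 1/(M\lambda(q))\}$. Because $M\lambda(q)\leq\eta^{\gamma}L$, every arc $\mathbf{M}_{a/q}$ (of half-width $1/(\eta^{\gamma}L)$) sits inside this band at $a/q$, so Plancherel gives
\begin{equation*}
\sum_{x\in\Z}|\langle f_B,1_{x+P}\rangle|^2 \;=\; \int_{\T}|\widehat{f_B}(\alpha)|^2\,|\widehat{1_P}(\alpha)|^2\, d\alpha \;\gtrsim\; M^2\!\int_{\mathbf{M}_q}|\widehat{f_B}(\alpha)|^2\, d\alpha \;\geq\; M^2\omega\sigma^2 L.
\end{equation*}

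To extract a single translate carrying a one-sided excess, write $y_x=\langle f_B,1_{x+P}\rangle$ and observe that $\sum_x y_x \approx 0$ (since $f_B$ has mean zero and the translates $1_{x+P}$ cover $[1,L]$ with uniform multiplicity $M$), while $\sum_x(y_x)_+\leq\sum_x\langle 1_B,1_{x+P}\rangle=\sigma LM$. The elementary inequality
\begin{equation*}
\sum_x y_x^2 \;\leq\; 2\Bigl(\max_x (y_x)_+\Bigr)\!\sum_x(y_x)_+
\end{equation*}
then forces $\max_x (y_x)_+\gtrsim M\omega\sigma$, yielding an $x$ for which $|B\cap(x+P)|/M\geq \sigma+c\omega\sigma$. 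Shrinking $c_2$ in $\eta=c_2\sigma$ if necessary to absorb constants, this becomes the stated $\sigma+\omega\sigma/4$, and taking $L'=M$ gives the required $L'\leq L/\lambda(q)$ and $L'\gg\min\{\eta^{\gamma},\omega\sigma\}L/\lambda(q)$.

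The main technical obstacle is the quantitative Fourier comparison $|\widehat{1_P}|^2\gtrsim M^2\cdot 1_{\mathbf{M}_q}$ that drives the key display. Outside the central band, $\widehat{1_P}$ decays only like $1/(\lambda(q)|\alpha-a/\lambda(q)|)$, so off-band contributions have to be tracked; the scale constraint $M\lambda(q)\lesssim\eta^{\gamma}L$ is precisely what matches the width of the arc cutoff in $\mathbf{M}_{a/q}$ to the width of the Fourier bump of $\widehat{1_P}$ at $a/\lambda(q)$, while the additional factor $\omega\sigma$ in $M$ provides the quantitative slack needed by the positive-cancellation step to yield the linear (rather than square-root) dependence on $\omega$. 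This calibration is exactly what is encoded in the $\min\{\eta^{\gamma},\omega\sigma\}$ appearing in the final bound on $L'$.
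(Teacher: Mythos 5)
The paper does not actually prove this lemma in-house; it cites it directly from Proposition~7.2 of Ruzsa--Sanders \cite{Ruz}. Your attempt to supply an actual argument follows the right overall template (Plancherel comparison against a long progression of step $\lambda(q)$, then a positive-cancellation step), and the first half is sound: since $q\mid\lambda(q)$, each center $a/q$ is also a rational with denominator $\lambda(q)$, and with $M\lambda(q)\leq\eta^{\gamma}L$ the bump of $\widehat{1_P}$ does cover $\mathbf{M}_{a/q}$, so the display $\sum_x|\langle f_B,1_{x+P}\rangle|^2\gtrsim M^2\omega\sigma^2L$ is fine.

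The gap is in the positive-cancellation step. The inequality $\sum_x y_x^2\leq 2\big(\max_x(y_x)_+\big)\sum_x(y_x)_+$ is not an elementary identity; unpacking it via $\sum y_x^2=\sum(y_x)_+^2+\sum(y_x)_-^2$ and $\sum(y_x)_+=\sum(y_x)_-$, it is equivalent to $\max_x(y_x)_-\leq\max_x(y_x)_+$, which you never establish and which can genuinely fail (take $B=[1,\sigma L]$ with $\sigma>1/2$: then the worst deficit on a translate is $\sigma M$ while the best surplus is only $(1-\sigma)M$). The only a priori bound available is $(y_x)_-\leq\sigma M$, which gives $\sum y_x^2\leq\big(\max(y_x)_++\sigma M\big)\sum(y_x)_+$; fed into your numbers this only yields $\max(y_x)_+\gtrsim M\omega\sigma - \sigma M$, which is vacuous for the regime $\omega\lesssim 1$ that actually occurs in Lemma~\ref{L2} (there $\omega\asymp\sigma^{\gamma-1}$ is small). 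The standard repair is the shifting trick applied over a \emph{finite} index set: restrict to the $t\lesssim L$ translates with $(x+P)\cap[1,L]\neq\emptyset$ (or, cleaner, to a genuine partition of $[1,L]$ into progressions of step $\lambda(q)$ and length $\asymp L'$), set $M^\ast=\max y_x$, note $0\leq M^\ast-y_x\leq M^\ast+\sigma M$ and $\sum(M^\ast-y_x)=tM^\ast$, and deduce $\sum y_x^2\leq tM^\ast\sigma M$, hence $M^\ast\gtrsim M\omega\sigma$. Without this (or an equivalent device) the crucial step of your proposal does not go through.
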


\subsection{Proof of Lemma \ref{mainit}} Suppose $B \subset [1,L]$ meets all the hypotheses of the lemma. 

\noindent If $|B \cap (L/9, 8L/9)| < 3\sigma L/4$, then 
\begin{equation*} \max \left\{ |B\cap [1,L/9]|, |B\cap [8L/9]| \right\} \geq \sigma L/8. \end{equation*}
In other words, $B$ has density at least $9\sigma/8$ on one of these intervals.

\noindent Otherwise, Lemmas \ref{L2} and \ref{dinc} apply, so in either case there exists $q \leq \eta^{-\gamma}$ and an arithmetic progression  $P= \{x+\ell \lambda(q) : 1 \leq \ell \leq L'\} $ with 
\begin{equation*}L/\lambda(q) \geq L' \gg \sigma^{\gamma}L/\lambda(q))\gg \sigma^{\gamma(k+1)} L  \quad and \quad |B\cap P|/L' \geq  \sigma + c\sigma^{\gamma}. \end{equation*}
This allows us to define a new set $B' \subset [1,L']$ by
\begin{equation*}\label{A'} B' =\{\ell \in [1,L']: x+\ell\lambda(q)\in B\}, \end{equation*} which by Proposition \ref{scale} satisfies $\R_{qd}(B')\leq \R_{d}(B)$, as required.
\qed

\subsection{Proof of Lemma \ref{L2}}

Suppose $B \subseteq [1,L]$, $|B|/L=\sigma\geq\delta $, and $L \geq \sqrt{N}$. Let $\eta=c_2\sigma$, and suppose further that $q_0\mid d$ and $d/q_0 \leq \max\{\log N, Q(\delta)\}^2$. For the remainder of the proof, we keep this $d$ fixed and omit it from the notations $H_d,\ M_d, \ \nu_d, \ \R_d,$ and $\Psi_d$ defined in Sections 2 and 3. 

\noindent Since $h_d(H)\subseteq [1,L/9)$, we see that 
\begin{align*}
\sum_{\substack{x \in \Z \\ y\in H}} f_B(x)f_B(x+h_d(y))\nu(y)&=\sum_{\substack{x \in \Z \\ y\in H}} 1_B(x)1_B(x+h_d(y))\nu(y)-\sigma\sum_{\substack{x \in \Z \\ y\in H}} 1_B(x)1_{[1,L]}(x+h_d(y))\nu(y) \\\\ &-\sigma \sum_{\substack{x \in \Z \\ y\in H}} 1_{[1,L]}(x-h_d(y))1_B(x)\nu(y) +\sigma^2\sum_{\substack{x \in \Z \\ y\in H}} 1_{[1,L]}(x)1_{[1,L]}(x+h_d(y))\nu(y)  \\\\&\leq \R(B) + \Big(\sigma^2L-\sigma\Big(|B\cap [1,8L/9)|+|B\cap (L/9,L]|\Big)\Big) \sum_{y\in H} \nu(y).
\end{align*}
By (\ref{symdif}) we have that $$\sum_{y\in H} \nu(y)= \Psi + O(\log L),$$ so if $|B \cap (L/9, 8L/9)| \geq 3\sigma L/4$ and $\R(B) \leq \sigma^2L\Psi/8$, we have that 
\begin{equation}\label{neg} \sum_{\substack{x \in \Z \\ y\in H}} f_B(x)f_B(x+h_d(y))\nu(y) \leq -\sigma^2L\Psi/8. 
\end{equation} 
One can easily check using (\ref{symdif}) and orthogonality of characters that 
\begin{equation}\label{orth}
\sum_{\substack{x \in \Z \\ y\in H}} f_B(x)f_B(x+h_d(y))\nu(y)=\int_0^1 |\widehat{f_B}(\alpha)|^2S_M(\alpha)\textnormal{d}\alpha +O(L\log L),
\end{equation}
where 
\begin{equation*}S_X(\alpha)= \sum_{x=1}^X\nu(x)e^{2\pi i h_d(x)\alpha}.
\end{equation*}
Combining (\ref{neg}) and (\ref{orth}), we have 
\begin{equation} \label{mass}
\int_0^1 |\widehat{f_B}(\alpha)|^2|S_M(\alpha)|\textnormal{d}\alpha \geq \sigma^2L\Psi/16.
\end{equation}

\noindent It follows from  Lemma \ref{RS}, an observation of Lucier on auxiliary polynomials, and Theorem 4.1 of \cite{lipan} that
\begin{equation}\label{Smaj}|S_M(\alpha)| \ll q^{-1/\gamma}\Psi  \text{ for all } \alpha \in \mathbf{M}_{q}\subset \mathfrak{M},\end{equation}
and 
\begin{equation}\label{Smin} |S_M(\alpha)| \leq C\eta \Psi \leq \sigma \Psi/32\text{ for all } \alpha\in \mathfrak{m}, \end{equation} 
provided we choose $c_2<1/32C$. We discuss these estimates in more detail in Section \ref{M}.  
 
\noindent From (\ref{Smin}) and Plancherel's Identity, we have \begin{equation*}  \int_{\mathfrak{m}} |\widehat{f_B}(\alpha)|^2|S_M(\alpha)|\textnormal{d}\alpha \leq \sigma^2L\Psi/32, \end{equation*} which together with (\ref{mass}) yields \begin{equation}\label{majmass}  \int_{\mathfrak{M}}|\widehat{f_B}(\alpha)|^2|S_M(\alpha)|\textnormal{d}\alpha \geq \sigma^2L\Psi/32. \end{equation} 
By  (\ref{Smaj}) and (\ref{majmass}), we have \begin{equation*} \sigma^2L \ll  \Big(\sum_{q=1}^{\eta^{-\gamma}} q^{-1/\gamma} \Big) \max_{q\leq \eta^{-\gamma} }\int_{\mathbf{M}_q}|\widehat{f_B}(\alpha)|^2\textnormal{d}\alpha \ll \sigma^{-\gamma+1}  \max_{q\leq \eta^{-\gamma} }\int_{ \mathbf{M}_q} |\widehat{f_B}(\alpha)|^2\textnormal{d}\alpha,\end{equation*}  and the lemma follows.
\qed

\section{Major and Minor Arc Estimates: Proof of (\ref{Smaj}) and (\ref{Smin})} \label{M}
We remain in the setting of the proof of Lemma \ref{L2}, recalling all hypotheses and notation defined there. We first state some required estimates, which we use to deduce (\ref{Smaj}) and (\ref{Smin}), then we include the necessary proofs in Appendix \ref{A}.

\begin{lemma}\label{asym} If $Q(\delta)\geq \log N$ and $\alpha = a/q +\beta$ with $q\leq (q_0Q(\delta)^2)^K$, $(a,q)=1$, and $|\beta| < (q_0Q(\delta)^2)^K/L$, then
\begin{equation*} S_M(\alpha) = \frac{\phi(d)}{\phi(qd)} G(a,q) \int_1^M (1-\chi(r_d)(dx)^{\rho-1})e^{2\pi i h_d(x)\beta}\textnormal{d}x + O(Me^{-5K^2c_1\sqrt{\log N}}),
\end{equation*}
where 
\begin{equation*}G(a,q)= \sum_{\substack{\ell=0 \\ (r_d+d\ell,q)=1}}^{q-1} e^{2\pi i h_d(\ell)a/q}.
\end{equation*}
\end{lemma}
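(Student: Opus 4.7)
The plan is to execute a standard major-arc analysis on $S_M(\alpha)$, substituting the Ruzsa--Sanders asymptotic of Lemma \ref{RS} (applicable because $q_0\mid d$, and $qd\le (q_0Q(\delta))^{3K}$ by the hypotheses) in place of the usual Siegel--Walfisz input. First, split $[1,M]$ into residue classes modulo $q$: since $h_d\in\Z[x]$, we have $h_d(\ell+qt)\equiv h_d(\ell)\pmod q$, so
\begin{equation*}
S_M(\alpha)=\sum_{\ell=0}^{q-1} e^{2\pi i h_d(\ell)a/q}\sum_{\substack{1\le x\le M\\ x\equiv \ell\,(\mathrm{mod}\,q)}}\nu_d(x)\,e^{2\pi i h_d(x)\beta}.
\end{equation*}
Since $\nu_d$ is supported on $x$ with $r_d+dx\in\P$, and since $(r_d,d)=1$ automatically forces $(r_d+d\ell,d)=1$, only $\ell$ with $(r_d+d\ell,q)=1$ contribute (the remaining terms come from finitely many primes dividing $q$ and are absorbed into the error). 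For each such $\ell$, the primes in question lie in the single residue class $r_d+d\ell\bmod qd$.

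Next, I would apply Abel summation to peel off the smooth oscillatory factor $e^{2\pi i h_d(x)\beta}$, rewriting the inner sum as an integral against $\psi(r_d+dx,\,r_d+d\ell,\,qd)$. Since $Q(\delta)\ge\log N$ and $dM\asymp L^{1/k}\ge N^{1/2k}\ge N^{1/10k}$, Lemma \ref{RS} applies to give
\begin{equation*}
\psi(X,\,r_d+d\ell,\,qd)=\frac{X}{\phi(qd)}-\frac{\chi(r_d+d\ell)\,X^{\rho}}{\phi(qd)\,\rho}+O\!\left(Xe^{-30kK^2 c_1\sqrt{\log X}}\right).
\end{equation*}
The crucial structural observation is that $\chi$ is a Dirichlet character modulo $q_0$ and $q_0\mid d$, so $r_d+d\ell\equiv r_d\pmod{q_0}$ and hence $\chi(r_d+d\ell)=\chi(r_d)$ uniformly in $\ell$. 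This constant factor pulls out of the $\ell$-sum. Differentiating the asymptotic, reverting to the variable $x=(X-r_d)/d$, and absorbing the $\phi(d)/d$ weight built into $\nu_d$ yields the inner integral
\begin{equation*}
\frac{\phi(d)}{\phi(qd)}\int_1^M \!\left(1-\chi(r_d)(dx)^{\rho-1}\right) e^{2\pi i h_d(x)\beta}\,\mathrm{d}x,
\end{equation*}
and the sum over admissible $\ell$ of $e^{2\pi i h_d(\ell)a/q}$ is exactly $G(a,q)$.

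The remaining task is error bookkeeping, which I expect to be the main technical obstacle. For each of the at most $q$ residue classes the Ruzsa--Sanders error is $O(dMe^{-30kK^2 c_1\sqrt{\log(dM)}})$; the partial summation boundary terms are of the same order, and the variation factor from the phase is at most $|\beta|\cdot|h_d'(x)|\cdot M\ll (q_0Q(\delta)^2)^{K}$, which is harmless relative to the exponential savings. Using $qd\le(q_0Q(\delta))^{3K}\le Q(\delta)^{O(K^2)}\le e^{O(K^2)c_1\sqrt{\log N}}$ together with $\sqrt{\log(dM)}\gg\sqrt{\log N/k}$, all polynomial-in-$q,d$ losses are dominated by the exponential savings provided $c_1$ is chosen small enough (and $N$ large enough), collapsing the total error to $O(Me^{-5K^2 c_1\sqrt{\log N}})$, as required. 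The hard part is verifying that the numerical exponents line up: the saving $30kK^2c_1\sqrt{\log(dM)}$ in Lemma \ref{RS} has to beat the moduli bound $qd\le(q_0Q(\delta))^{3K}$ \emph{and} the loss from summing over $q$ residue classes, with a margin of $5K^2c_1\sqrt{\log N}$ left over for the stated error.
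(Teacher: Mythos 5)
Your approach is essentially the paper's: split $S_M(\alpha)$ into residue classes modulo $q$, observe that only classes $\ell$ with $(r_d+d\ell,q)=1$ contribute a main term, evaluate the count in each such class via Lemma~\ref{RS} applied to modulus $qd$, and then transfer from $a/q$ to $a/q+\beta$ by Abel summation. The paper reverses the last two steps—first it establishes the asymptotic for $S_X(a/q)$ uniformly in $X\le M$ (using trivial bounds when $X<N^{1/10k}$ is too small for Lemma~\ref{RS}), and then performs two integrations by parts on that single sum to introduce the integral and the factor $(1-\chi(r_d)(dx)^{\rho-1})$; your per-class Abel summation followed by ``differentiating the asymptotic'' is just the same double integration by parts done class-by-class. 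One thing you make explicit that the paper leaves implicit is the crucial identity $\chi(r_d+d\ell)=\chi(r_d)$ coming from $q_0\mid d$, without which the $\chi$ term would not factor out of the $\ell$-sum to produce $G(a,q)$; you should, however, flag (as the paper does) that the Ruzsa--Sanders asymptotic cannot be applied literally for the small range $dx+r_d<N^{1/10k}$ appearing at the bottom of the Abel integral, where a trivial bound must be substituted instead. Your error bookkeeping, including the verification that $qd\le(q_0Q(\delta))^{3K}$ and the comparison of the exponential saving against the polynomial-in-$q$ losses and the $|\beta|\,h_d(M)\ll(q_0Q(\delta)^2)^K$ phase-variation factor, matches the paper's.
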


\begin{lemma}\label{asym2} If $Q(\delta)\leq \log N$ and $\alpha = a/q +\beta$ with $q\leq (q_0(\log N)^2)^{K}$, $(a,q)=1$, and $|\beta| < (q_0(\log N)^2)^{K}/L$, then
\begin{equation*} S_M(\alpha) = \frac{\phi(d)}{\phi(qd)} G(a,q) \int_1^M e^{2\pi i h_d(x)\beta}\textnormal{d}x + O(Me^{-c\sqrt{\log N}}).\end{equation*}
\end{lemma}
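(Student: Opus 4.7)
The plan is to prove Lemma \ref{asym2} by decomposing $S_M(\alpha)$ into residue classes modulo $q$ and reducing the inner sum via Abel summation to an estimate for primes in arithmetic progressions, for which the hypothesis $Q(\delta)\leq \log N$ permits direct application of Siegel-Walfisz (Lemma \ref{SW}). The Ruzsa--Sanders dichotomy of Lemma \ref{RS} is not needed here, which is exactly why the statement lacks the exceptional-character correction appearing in Lemma \ref{asym}.

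Using the integrality of the coefficients of $h_d$, so that $h_d(x)\equiv h_d(\ell)\pmod q$ whenever $x\equiv \ell \pmod q$, I would first write
\begin{equation*}
S_M(\alpha) = \frac{\phi(d)}{d}\sum_{\ell=0}^{q-1} e^{2\pi i h_d(\ell)a/q}\sum_{\substack{1\leq x\leq M,\ x\equiv \ell \,(q)\\ r_d+dx\in\P}}\log(r_d+dx)\,e^{2\pi i h_d(x)\beta}.
\end{equation*}
Since $(r_d,d)=1$ forces $(r_d+dx,d)=1$ automatically, any contribution from $\ell$ with $(r_d+d\ell,q)>1$ comes only from primes dividing $q$, and is therefore $O(\omega(q)\log N)=(\log N)^{O(1)}$, absorbed into the error. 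So it suffices to handle the admissible $\ell$, which are precisely those summed over in $G(a,q)$.

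For each admissible $\ell$, the inner sum counts primes $p=r_d+dx$ in the single residue class $p\equiv r_d+d\ell\pmod{qd}$, weighted by $\log p$ and twisted by $e^{2\pi i h_d(x)\beta}$. I would apply Abel summation against the oscillatory kernel to the function $\theta(y;r_d+d\ell,qd)$. In the present regime $d\leq \log N$ and $q_0\leq (\log N)^{3K}$, so $qd\leq(\log N)^{O(1)}$, while $L\geq\sqrt N$ and $M\asymp (L/b_d)^{1/k}$ give $\log(r_d+dM)\gg \log N$; hence Siegel-Walfisz applies with modulus $qd$ and yields $\theta(y;r_d+d\ell,qd)=y/\phi(qd)+O(ye^{-c\sqrt{\log y}})$. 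Substituting the main term into the Abel formula and integrating by parts in reverse converts it to $\tfrac{d}{\phi(qd)}\int_1^M e^{2\pi i h_d(t)\beta}\,\textnormal{d}t+O(1)$. Summing over the $\phi(q)$ admissible residues and multiplying by $\phi(d)/d$ reassembles $\tfrac{\phi(d)}{\phi(qd)}G(a,q)\int_1^M e^{2\pi i h_d(x)\beta}\,\textnormal{d}x$.

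The main technical point is checking that the cumulative Siegel-Walfisz error, chiefly from the integrated Abel piece
\begin{equation*}
\int_{r_d}^{r_d+dM}y\,e^{-c\sqrt{\log y}}\cdot 2\pi|\beta\,h_d'((y-r_d)/d)|/d\,\textnormal{d}y,
\end{equation*}
remains $O(Me^{-c'\sqrt{\log N}})$. Using the polynomial bound $|h_d'(t)|\ll b_dM^{k-1}\asymp L/M$ together with $|\beta|\leq (q_0(\log N)^2)^K/L$ gives $|\beta\,h_d'(t)|\ll (\log N)^{O(1)}/M$, so the integrand is $\ll d(\log N)^{O(1)}e^{-c\sqrt{\log N}}$ and the integral is $\ll dM(\log N)^{O(1)}e^{-c\sqrt{\log N}}$. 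Combined with the boundary term from Abel summation and summed over the $\phi(q)$ residues with prefactor $\phi(d)/d$, all polylogarithmic factors are absorbed by slightly shrinking $c$, yielding the claimed error.
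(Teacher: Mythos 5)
Your argument is correct and is essentially the same as the paper's: the paper proves Lemma \ref{asym} in detail in Appendix \ref{A} and then states that Lemma \ref{asym2} follows "from Lemma \ref{SW} in an analogous, more standard way," which is precisely what you carry out — decompose mod $q$, replace the prime-counting input (Lemma \ref{RS}) by Siegel--Walfisz, do Abel summation against $e^{2\pi i h_d(x)\beta}$, and absorb the polylogarithmic losses in $qd\leq(\log N)^{O(1)}$ and $|\beta h_d'|$ into the $e^{-c\sqrt{\log N}}$ error. The only cosmetic difference is that you perform Abel summation separately in each residue class, while the paper performs a single partial summation against the cumulative sum $S_X(a/q)$; these are equivalent. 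Minor points you elide (trivial bounds for small $X$ before Siegel--Walfisz kicks in; the weight for the non-admissible $\ell$ is really $O(\omega(q)\log q)$ rather than $O(\omega(q)\log N)$) are also elided in the paper and are harmless.
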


\noindent In Appendix \ref{A}, we exhibit how Lemma \ref{asym} follows from Lemma \ref{RS}, and Lemma \ref{asym2} follows from Lemma \ref{SW} in an analogous, more standard way.

\begin{lemma}\label{gs} Suppose $g(x)=a_0+a_1x+\cdots+a_kx^k \in \Z[x]$. If $W,b\in \Z$, $q\in \N$ and $(a,q)=1$, then
\begin{equation} \Big|\sum_{\substack{\ell=0 \\ (W\ell+b,q)=1}}^{q-1} e^{2\pi i g(\ell) a/q}\Big| \leq C^{\omega(q)}\Big(\gcd(\textnormal{cont}(g),q_1)\gcd(a_k,q_2)\Big)^{1/k} q^{1-1/k},
\end{equation} 
where $\omega(q)$ is the number of distinct prime factors of $q$, $q=q_1q_2$, $q_2$ is the maximal divisor of $q$ which is coprime to W,  and \begin{equation*} \textnormal{cont}(g) := \gcd(a_1, \dots, a_k). \end{equation*}
\end{lemma}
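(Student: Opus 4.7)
The strategy is to split $q = q_1 q_2$ via CRT, separate the coprimality condition into independent conditions modulo $q_1$ and $q_2$, and bound each resulting factor by an appropriate Weyl-type estimate. First I would observe that $\gcd(q_1, q_2) = 1$: by maximality of $q_2$, every prime factor of $q_1$ divides $W$, while every prime of $q_2$ is coprime to $W$. Writing $a/q \equiv a_1/q_1 + a_2/q_2 \pmod 1$ with $(a_i, q_i) = 1$ and parameterizing $\ell \pmod q$ by $(\ell_1, \ell_2) \in \Z/q_1\Z \times \Z/q_2\Z$, the exponential factors as a product, and the condition $(W\ell + b, q) = 1$ decomposes as the conjunction $(W\ell_i + b, q_i) = 1$, yielding $S = S_1 \cdot S_2$ where $S_i$ is the corresponding sum modulo $q_i$.

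For the $q_1$-factor, since every prime $p \mid q_1$ also divides $W$, we have $W\ell_1 + b \equiv b \pmod p$, so the condition is independent of $\ell_1$ and collapses to $\gcd(b, q_1) = 1$. If this fails, $S_1 = 0$ and the estimate is trivial; otherwise $S_1$ becomes a complete exponential sum to which the classical Mordell/Hua bound yields $|S_1| \ll q_1^{1 - 1/k}\gcd(\textnormal{cont}(g), q_1)^{1/k}$ (the unit $a_1$ being absorbed into the gcd since it is coprime to $q_1$). For the $q_2$-factor, since $(W, q_2) = 1$ the substitution $u = W\ell_2 + b \pmod{q_2}$ is a bijection turning the condition into $(u, q_2) = 1$ and replacing $g(\ell_2)$ by $\tilde g(u) := g(W^{-1}(u - b))$, a polynomial of degree $k$ with leading coefficient $a_k W^{-k}$. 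The Weyl-type bound for exponential sums over reduced residues then gives $|S_2| \ll q_2^{1 - 1/k}\gcd(a_k W^{-k}, q_2)^{1/k} = q_2^{1 - 1/k}\gcd(a_k, q_2)^{1/k}$, the equality holding because $(W, q_2) = 1$. Multiplying the two bounds and using $q = q_1 q_2$ produces the claim.

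The main obstacle is the reduced-residue bound for $S_2$, which is not quite the standard complete-sum estimate. The natural route is Möbius inversion, writing $\sum_{(u, q_2) = 1} = \sum_{d \mid q_2} \mu(d) \sum_{v \pmod{q_2/d}}$ and applying the content-type complete-sum bound to the inner sum over $v$; the relevant polynomial $\tilde g(dv)$ has leading coefficient $a_k W^{-k} d^k$, so one must track the gcd contributions over divisors $d \mid q_2$ and verify that they collapse to $\gcd(a_k, q_2)^{1/k} q_2^{1 - 1/k}$. This bookkeeping is routine but is the place where the polynomial degree is genuinely exploited, and it is the only step that is not immediate once the CRT decomposition is in place.
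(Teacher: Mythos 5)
Your CRT decomposition of $q$ into $q_1$ (the part supported on primes dividing $W$) and $q_2$ (the part coprime to $W$) matches the paper's strategy, which reduces to prime powers via CRT and then bifurcates on $p\mid W$ versus $p\nmid W$; the treatment of the $q_1$-factor is correct. The gap is in the $q_2$-factor. After substituting $u = W\ell_2 + b$ you need a bound of the form
\begin{equation*}
\Big|\sum_{\substack{u \bmod q_2 \\ (u,q_2)=1}} e^{2\pi i \hat g(u)/q_2}\Big| \ll \gcd(\hat a_k, q_2)^{1/k} q_2^{1-1/k},
\end{equation*}
and your proposed route — M\"obius inversion $\sum_{(u,q_2)=1} = \sum_{d\mid q_2}\mu(d)\sum_{v \bmod q_2/d}$ followed by the complete-sum bound on each term — does not deliver it as stated. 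Writing $\hat g(dv) = \hat a_0 + d\, g^*_d(v)$ with $g^*_d(v) = \sum_{i\geq 1}\hat a_i d^{i-1}v^i$, the inner sum for each squarefree $d\mid q_2$ is bounded by $\gcd(\hat a_k d^{k-1}, q_2/d)^{1/k}(q_2/d)^{1-1/k}$, and since $\gcd(d^{k-1}, q_2/d)$ can be as large as $d^{k-1}$, each of these terms can be of size $\gcd(\hat a_k,q_2)^{1/k}q_2^{1-1/k}$ with no decay in $d$. Summing over all squarefree divisors then costs a factor of $2^{\omega(q_2)}$, which is not $O(1)$. (Also, a small slip: the leading coefficient of the effective polynomial $g^*_d$ is $\hat a_k d^{k-1}$, not $\hat a_k d^k$, after the common factor of $d$ is absorbed into the denominator.) So the bookkeeping is not routine; it genuinely fails without a further reduction.

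The missing ingredient — and what the paper does — is to apply CRT all the way down to prime powers $q = p^j$ \emph{before} any inclusion--exclusion, so that at each prime power the M\"obius sum has exactly two terms: the complete sum minus the sum over the single excluded residue class $\ell \equiv -bW^{-1} \pmod p$. The paper also avoids the modular substitution $u = W\ell+b$ entirely: rather than inverting $W$ (which yields a polynomial with coefficients defined only mod $q_2$), it writes the excluded class as $\ell = pr + m$ with $m$ an integer lift of $-bW^{-1}\bmod p$ and works with the genuine integer polynomial $\tilde g(r) = (g(pr+m)-g(m))/p$, whose leading coefficient is exactly $a_k p^{k-1}$, from which $\gcd(\mathrm{cont}(\tilde g), p^{j-1}) \leq p^{k-1}\gcd(a_k, p^{j-1})$ and the claimed bound follow directly. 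If you insert the prime-power reduction into your $q_2$-argument, it collapses to essentially this computation.
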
 

\noindent \textit{Remark.} In the published version of this paper, the factor of $C^{\omega(q)}$ in Lemma \ref{gs} is incorrectly absent.
 
\noindent The statement of Lemma \ref{gs} indicates that we could lose control of the sum $G(a,q)$ if the coefficients of the auxiliary polynomials $h_d$ share larger and larger common factors. The following observation of Lucier ensures that this does not occur.
\begin{lemma}[Lemma 28 in \cite{Lucier}] \label{content} For every $d\in \N$, \begin{equation*} \textnormal{cont}(h_d) \leq |\Delta(h)|^{(k-1)/2}\textnormal{cont}(h), \end{equation*} where $\Delta(h)=a^{2k-2}\prod_{i\neq j} (\alpha_i-\alpha_j)^{e_ie_j}$ if $h$ factors over the complex numbers as $a(x-\alpha_1)^{e_1}\dots(x-\alpha_r)^{e_r}$ with all the $\alpha_i$'s distinct.
\end{lemma}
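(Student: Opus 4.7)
The plan is to establish the bound one prime at a time: for every prime $p$, show
\[ v_p(\text{cont}(h_d)) \leq v_p(\text{cont}(h)) + \frac{k-1}{2} v_p(\Delta(h)), \]
and then multiply over primes. The starting point is Taylor's theorem applied to $h$ around $r_d$, which gives
\[ h_d(x) = \sum_{j=0}^k \frac{H_j(r_d)d^j}{\lambda(d)} \, x^j, \qquad H_j := h^{(j)}/j! \in \Z[x]. \]
Since $\text{cont}(h_d)$ divides each of its nonconstant coefficients,
\[ v_p(\text{cont}(h_d)) \leq \min_{1 \leq j \leq k} \bigl( v_p(H_j(r_d)) + (j - m_p)v_p(d) \bigr), \]
where $m_p$ is the multiplicity of $z_p$ as a root of $h$ in $\Z_p$, so that $v_p(\lambda(d)) = m_p v_p(d)$.

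For primes $p$ not dividing $d$, the bound reduces to $v_p(\text{cont}(h_d)) \leq \min_j v_p(H_j(r_d))$. A downward induction on $j$ starting from $j = k$ (where $H_k(r_d)$ is the leading coefficient of $h$) shows that if $p^N \mid H_j(r_d)$ for every $j \in \{1,\dots,k\}$, then $p^N$ divides each nonconstant coefficient of $h$, so that $v_p(\text{cont}(h_d)) \leq v_p(\text{cont}(h))$ and the $\Delta(h)$ term is not needed for these primes.

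For primes $p \mid d$, the key is to test the minimum with $j = m_p$ (noting $m_p \leq k$, since $z_p$ cannot have root-multiplicity exceeding $\deg h$). By construction of $r_d$ via reduction and CRT, we have $r_d \equiv z_p \pmod{p^{v_p(d)}}$. Factoring $h(x) = (x - z_p)^{m_p}\tilde h(x)$ in $\Z_p[x]$ with $\tilde h(z_p) \neq 0$ gives $H_{m_p}(z_p) = \tilde h(z_p)$, and a $p$-adic Taylor expansion of $H_{m_p}$ around $z_p$ yields $v_p(H_{m_p}(r_d)) \leq v_p(\tilde h(z_p))$, a quantity independent of $d$. Finally, we bound $v_p(\tilde h(z_p))$ in terms of $v_p(\Delta(h))$: factoring $h = a\prod_j (x-\beta_j)^{f_j}$ over $\overline{\Q_p}$, we have $\tilde h(z_p) = a \prod_{\beta_j \neq z_p}(z_p - \beta_j)^{f_j}$, and the factors $(z_p - \beta_j)$ appear in $\Delta(h) = a^{2k-2}\prod_{i \neq j}(\beta_i - \beta_j)^{f_i f_j}$ with combined exponent $2 m_p f_j$, so that partial contributions of $\Delta(h)$ dominate $\tilde h(z_p)$ up to leading-coefficient adjustments.

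The main obstacle will be tracking the leading coefficient and the exact exponent of $\Delta(h)$ uniformly across all primes, since naive application of the above gives a factor of $v_p(\Delta(h))/(2m_p)$, and combining this with the contribution of $v_p(a)$ (relative to $v_p(\text{cont}(h))$) and summing over all distinct roots to obtain the bound $\tfrac{k-1}{2} v_p(\Delta(h))$ requires a combinatorial accounting using $\sum_j f_j = k$. A secondary subtlety is handling the regime where $v_p(d)$ is smaller than $v_p(\tilde h(z_p))$, which is resolved by the fact that in that regime $H_{m_p}(r_d)$ and $H_{m_p}(z_p)$ need not agree, but the minimum over $j$ of the coefficient valuations is still controlled by the same quantity.
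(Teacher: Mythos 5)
The paper does not prove this lemma; it is quoted verbatim as Lemma~28 of \cite{Lucier}, so there is no in-paper argument to compare against, and your proposal has to stand on its own. The prime-by-prime strategy and the identity $H_{m_p}(z_p) = \tilde h(z_p)$ are reasonable starting points, but there are two genuine gaps, both of which you flag but neither of which you close.

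First, the claimed inequality $v_p(H_{m_p}(r_d)) \leq v_p(\tilde h(z_p))$ is simply false in general. The Taylor expansion of $H_{m_p}$ about $z_p$ gives $v_p\bigl(H_{m_p}(r_d) - \tilde h(z_p)\bigr) \geq v_p(r_d - z_p) \geq v_p(d)$, which pins down $v_p(H_{m_p}(r_d))$ only when $v_p(d) > v_p(\tilde h(z_p))$; otherwise the two valuations are unrelated. Concretely, take $h(x) = (x-1)(x-9)$, $p = 2$, $z_2 = 1$, so $m_2 = 1$, $\tilde h(x) = x-9$, $v_2(\tilde h(z_2)) = 3$. With $d = 4$ one has $r_4 = -3$ and $H_1(r_4) = h'(-3) = -16$, so $v_2(H_1(r_4)) = 4 > 3$. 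In this example the content bound is nevertheless satisfied, because the minimum over $j$ is attained at $j = 2$ rather than $j = m_p$; but that is precisely the point you would need to prove, and your remark that the minimum ``is still controlled by the same quantity'' is an assertion of the conclusion rather than an argument. Testing only $j = m_p$ cannot work.

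Second, the passage from $v_p(\tilde h(z_p))$ to $\tfrac{k-1}{2}v_p(\Delta(h)) + v_p(\textnormal{cont}(h))$ is deferred to ``combinatorial accounting,'' but this step is not routine. Over $\overline{\Q_p}$ the root differences $\beta_i - \beta_j$ need not have nonnegative valuation when $h$ is non-monic and has non-integral roots, so one cannot pass from $\sum_{j}2m_pf_j\,v_p(z_p-\beta_j)$ to $\sum_{i\neq j}f_if_j\,v_p(\beta_i-\beta_j)$ by discarding the remaining (possibly negative) terms; the leading-coefficient contribution $v_p(a)$ must be traded off against $v_p(\textnormal{cont}(h))$ at the same time. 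Until both of these points are supplied with actual estimates, what you have is a plan for a proof rather than a proof.
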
  
\noindent While the statement of Lemma \ref{content} is pleasingly precise, we only use that cont$(h_d)$ is uniformly bounded in terms of the original polynomial $h$.
\begin{corollary}\label{Gaq} If $(a,q)=1$, then
\begin{equation*} |G(a,q)| \leq C^{\omega(q)}q^{1-1/k}, \end{equation*}
for some $C=C(h)$.
\end{corollary} 

\subsection{Proof of (\ref{Smaj})} We treat the case of $Q(\delta)\geq \log N$ using Lemma \ref{asym}, and the other case follows in a similar, slightly simpler fashion from Lemma \ref{asym2}. Since $\eta^{-\gamma}<Q(\delta)$, the hypotheses of Lemma \ref{asym} are certainly satisfied whenever $\alpha \in \mathbf{M}_q$ with $q\leq \eta^{-\gamma}$. Therefore, Lemma \ref{asym} and Corollary \ref{Gaq}, combined with the bound 
\begin{equation}\Big|\int_1^M(1-\chi(r_d)(dx)^{\rho-1})e^{2\pi i h_d(x)\beta}\textnormal{d}x\Big| \leq M-\chi(r_d)(dM)^{\rho}/d\rho  \ll \Psi
\end{equation}
from Lemma \ref{RS} and the well known facts $\phi(qd)\geq \phi(q)\phi(d)$ and $$\phi(q) \geq  c_{\mu}q^{1-\mu}, \quad C^{\omega(q)}\leq C'_{\mu}q^{\mu} \quad \text{for any} \quad \mu>0,$$ yield  
\begin{equation*}|S_M(\alpha)| \ll q^{-1/\gamma}\Psi + O(Me^{-5K^2c_1\sqrt{\log N}}).
\end{equation*}
Finally, the lower bound 
\begin{equation}\Psi \gg M/q_0 \geq Me^{-3Kc_1\sqrt{\log N}}
\end{equation}
given by (\ref{psiB2}) and (\ref{assmp}) ensures that the error term is negligible, and the estimate follows.
\qed

\noindent For our minor arc estimate we need the following analog of Weyl's Inequality, due to Li and Pan, which generalizes work of Vinogradov.
\begin{lemma}\label{min} Suppose $g(x)=a_0+a_1x+\cdots+a_kx^k \in \Z[x]$ with $a_k>0$, $D, W\in \N$, and $b\in \Z$. If $U\geq \log D$,
$a_k \gg |a_{k-1}| + \cdots +|a_0|,$ and $W,|b|,a_k \leq U^k$, then 
\begin{equation*}\sum_{\substack{x=1 \\ Wx+b \in \P}}^D \log(Wx+b)e^{2\pi i g(x) \alpha} \ll \frac{D}{U}+U^CD^{1-c}
\end{equation*} 
for some constants $C=C(k)$ and $c=c(k)>0$, provided \begin{equation*} |\alpha -a/q| < q^{-2} \quad \text{for some} \quad U^{K} \leq q \leq g(D)/U^{K} \quad \text{and} \quad (a,q)=1.
\end{equation*}
\end{lemma}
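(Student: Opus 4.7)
\textbf{Proof plan for Lemma \ref{min}.}

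My plan is to follow the classical Vinogradov strategy for exponential sums over primes, combining it with Weyl's inequality for polynomial phases and tracking the parameters $W$, $b$, $a_k \leq U^k$ carefully. First I would replace $\log(Wx+b)1_{\mathbb{P}}(Wx+b)$ by the restriction of the von Mangoldt function $\Lambda$ to the progression $\{Wx+b : 1\leq x \leq D\}$, at the cost of an error of size $O(\sqrt{g(D)})$ coming from higher prime powers, which is absorbed into $U^CD^{1-c}$ because $a_k \leq U^k$ forces $g(D) \ll U^k D^k$. Next I would apply Vaughan's identity at a threshold $V$ (to be optimized, roughly $V = D^{1/3}$) to decompose the resulting sum $\sum_{x \leq D} \Lambda(Wx+b) e^{2\pi i g(x)\alpha}$ into Type I sums of the shape $\sum_{m \leq V} c_m \sum_{n} e^{2\pi i g(n)\alpha}$ where $n$ runs through an arithmetic progression forcing $Wn+b \equiv 0 \pmod{m}$, and Type II sums of the shape $\sum_{V < m \leq D/V} \sum_{V < n} a_m b_n e^{2\pi i g(mn\text{-related})\alpha}$, again with a congruence condition inherited from $Wx+b$.

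For the Type I sums, after solving the congruence $Wn+b \equiv 0 \pmod{m}$ (using that the maximal divisor of $m$ coprime to $W$ controls the solvability), the inner sum becomes a polynomial exponential sum of degree $k$ in a single variable, with leading coefficient comparable to $a_k(W/m_0)^k$ times a bounded factor, where $m_0 \mid m$. Standard Weyl's inequality then gives a saving of $D^{-c}$ on the inner sum provided $a_k\alpha$ admits a suitable rational approximation of denominator in the range $[U^{K'}, g(D)U^{-K'}]$ for some $K'$; this follows from the hypothesis that $|\alpha - a/q| < q^{-2}$ with $U^K \leq q \leq g(D)/U^K$ together with the bound $W \leq U^k$. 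For the Type II sums, I would apply Cauchy--Schwarz in the outer variable to remove the sequence $a_m$, then perform Weyl differencing in the inner variable to reduce to a linear phase sum, and again invoke Weyl's inequality at the appropriate level. The contribution from $x \leq U$, as well as the error of size $D/U$ from the trivial part of Vaughan's decomposition, produce the main term $D/U$ in the bound.

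The main obstacle is the technical bookkeeping in the Type II range: after Cauchy-Schwarz and differencing, the leading coefficient of the differenced polynomial acquires factors depending on $m$, $W$, and the difference parameter, and I must guarantee that the resulting rational approximation to the leading coefficient of the differenced polynomial still has denominator in a range where Weyl's estimate yields a power saving. The choice $K = 2^{10k}$ in the exponent of $U^K$ is designed precisely to absorb the $2^{k-1}$-fold differencing loss as well as the multiplicative blow-up coming from $W, a_k \leq U^k$, so that the admissible interval $[U^K, g(D)/U^K]$ for $q$ shrinks under differencing to a still non-empty interval in which the classical Weyl bound is effective. Once this is verified, combining the Type I and Type II estimates and optimizing in $V$ produces the claimed bound $D/U + U^C D^{1-c}$ with $C$ and $c$ depending only on $k$.
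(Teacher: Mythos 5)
The paper does not actually prove Lemma \ref{min}; it is stated as a nominally generalized form of Theorem 4.1 of Li and Pan \cite{lipan}, and the paragraph following the lemma explicitly defers the proof to the appendix of that paper, observing only that their Lemmas 4.11 and 4.12 need slight adaptation to accommodate a general parameter $U$ in place of a fixed power of $\log D$. Your proposal reconstructs the standard Vinogradov/Vaughan framework that underlies Li--Pan's argument: pass to $\Lambda$, apply Vaughan's identity, handle Type I sums by solving the inherited congruence and invoking Weyl's inequality, and handle Type II sums by Cauchy--Schwarz and Weyl differencing. That is the right skeleton and is in the same spirit as the cited source, so in that sense your plan is consistent with the paper.

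That said, a few specific claims in your sketch are wrong and would need to be repaired. The error from replacing $\log(Wx+b)1_{\P}(Wx+b)$ by $\Lambda(Wx+b)$ comes from prime powers $p^j = Wx+b$ with $j\ge 2$. Since $Wx+b\le WD+b$, there are $O(\sqrt{WD+b})$ such $x$, so the error is $O(\sqrt{WD+b}\log(WD+b))\ll U^{k/2}D^{1/2}\log(UD)$, which is indeed absorbed in $U^CD^{1-c}$. Your bound $O(\sqrt{g(D)})$ is incorrect: $g(D)\gg D^k$, so $\sqrt{g(D)}\gg D^{k/2}\ge D$ for $k\ge 2$, and this is \emph{not} dominated by $D^{1-c}$. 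Second, after imposing $Wn+b\equiv 0\pmod m$, the variable $n$ runs through a progression of modulus $m/\gcd(W,m)$, so the inner polynomial's leading coefficient is $a_k(m/\gcd(W,m))^k$, not the expression $a_k(W/m_0)^k$ you wrote; the dependence is in the opposite direction. Third, the $D/U$ term is produced essentially by the Weyl saving itself once $q\ge U^K$, not by a "trivial part of Vaughan's decomposition." Finally, the attribution of $K=2^{10k}$ to the differencing loss of \emph{this} lemma is too specific: $K$ is defined once in Section~\ref{prelim} for the whole paper, and the text notes that Li--Pan's original theorem gives a sharper exponent in place of $K$. Your proposal correctly identifies the Type II bookkeeping as the heart of the matter, but does not actually carry it out, which is precisely the content of the appendix of \cite{lipan} to which the paper defers.
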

\noindent Lemma \ref{min} is a rougher, only nominally generalized version of Theorem 4.1 of \cite{lipan}. That result restricts to the case where $U$ is a power of $\log D$, and provides a more precise bound in place of $K$, but the main achievement of the theorem is that one can take $U$ to be that small. Larger values of $U$, and hence stricter conditions on $q$, actually make the proof, which can be found in the appendix of that paper, slightly easier. Specifically, one can observe that the precise condition on $q$ is not utilized until Lemmas 4.11 and 4.12, and adaptations of those lemmas are sufficient to adapt the proof of Theorem 4.1.

\subsection{Proof of (\ref{Smin})} Again, we only treat the case $Q(\delta)\geq \log N$. For a fixed $\alpha \in \mathfrak{m}$, we have by the pigeonhole principle that there exist 
\begin{equation*}1\leq q \leq L/(q_0Q(\delta)^2)^K\end{equation*}
 and $(a,q)=1$ with \begin{equation*} |\alpha-a/q| < (q_0 Q(\delta)^2)^K/(qL). \end{equation*}
If $\eta^{-\gamma} \leq q \leq (q_0Q(\delta)^2)^K$, then $\alpha$ meets the hypotheses of Lemma \ref{asym}, and by reasoning identical to the proof of (\ref{Smaj}) we have 
 \begin{equation*} |S_M(\alpha)| \ll q^{-1/\gamma}\Psi \ll \eta \Psi.
 \end{equation*}
If $(q_0 Q(\delta)^2)^K \leq q \leq L/(q_0 Q(\delta)^2)^K$, then we can apply Lemma \ref{min} with $U=q_0 Q(\delta)^2$, along with (\ref{psiB2}) and the fact that $\eta > Q(\delta)^{-1}$ to conclude 
\begin{equation*} |S_M(\alpha)| \ll \frac{M}{q_0Q(\delta)^2}\ll \frac{\Psi}{Q(\delta)^2}<\eta \Psi, 
\end{equation*} as required. 
  
\noindent If $1\leq q \leq \eta^{-\gamma}$, then, letting $\beta=\alpha-a/q$, it must be the case that \begin{equation} \label{beta} |\beta|>\eta^{-\gamma}/L,\end{equation} as otherwise we would have $\alpha \in \mathfrak{M}$. By Lemma \ref{asym} it suffices to show
\begin{equation*}\Big|\int_1^M (1-\chi(r_d)(dx)^{\rho-1})e^{2\pi i h_d(x)\beta}\textnormal{d}x\Big| \ll \eta\Psi.
\end{equation*}
From (\ref{beta}) and Lemma 2.8 of \cite{vaughan}, we have for any $x>1$ that
\begin{equation*}\Big|\int_1^x  e^{2\pi i h_d(y)\beta}\textnormal{d}y \Big| \ll (b_d|\beta|)^{-1/k} \ll \eta M,
\end{equation*} hence by integration by parts, Lemma \ref{RS}, and (\ref{psiB2}) we see 
\begin{align*}\Big|\int_1^M (1-\chi(r_d)(dx)^{\rho-1})e^{2\pi i h_d(x)\beta}\textnormal{d}x\Big| &\ll \eta (M-\chi(r_d)(dM)^{\rho}/d) \\& \leq \eta\Big( M - \frac{\chi(r_d)(dM)^{\rho}}{d\rho} +2(1-\rho)M \Big) \ll \eta \Psi,
\end{align*} 
and the estimate is complete. 

\appendix 
\section{Exponential Sum Estimates: Proof of Lemma \ref{asym}, Lemma \ref{gs}, and Corollary \ref{Gaq}} \label{A}
\subsection{Proof of Lemma \ref{asym}} Fixing $q\leq (q_0Q(\delta)^2)^K$ and $(a,q)=1$, we first investigate the values of $S_X(a/q)$ for $X \geq N^{1/10k}$. We see that 
\begin{equation}\label{Sx} S_X(a/q)=\sum_{x=1}^X\nu(x)e^{2\pi i h_d(x)a/q} =\sum_{\ell=o}^{q-1} e^{2\pi i h_d(\ell)a/q} \sum_{\substack{x=1 \\ x \equiv \ell \text{ mod } q}}^X \nu(x),
\end{equation} 
and we note that
\begin{equation}\label{psi1} \sum_{\substack{x=1 \\ x \equiv \ell \text{ mod } q}}^X \nu(x) = \phi(d)\psi(dX+r_d, r_d+d\ell, qd)/d.
\end{equation}
Since $(r_d,d)=1$, we have that $(r_d+d\ell,qd)=1$ if and only if $(r_d+d\ell,q)=1$. Therefore, if $(r_d+d\ell,q)>1$, we have $\psi(dX+r_d, r_d+d\ell, qd) \leq \log(dX+r_d) \ll \log X$, whereas if $(r_d+d\ell, q)=1$, we have by (\ref{psi1}) and Lemma \ref{RS} that 
\begin{equation}\label{psi} \sum_{\substack{x=1 \\ x \equiv \ell \text{ mod } q}}^X \nu(x) = \frac{\phi(d)}{\phi(qd)}\Big(X-\chi(r_d)(dX)^{\rho}/d\rho\Big) + O(Xe^{-30kK^2c_1\sqrt{\log X}}).
\end{equation}
Combining (\ref{Sx}) and (\ref{psi}), we have 
\begin{equation*} S_X(a/q)=\frac{\phi(d)}{\phi(qd)}G(a,q)\Big(X-\chi(r_d)(dX)^{\rho}/d\rho\Big) + O(qXe^{-30kK^2c_1\sqrt{\log X}})
\end{equation*} for all $X \geq N^{1/10k}$. In particular, since $q\leq e^{5K^2c_1\sqrt{\log N}}$ and $M\gg N^{1/4k}$, we can apply trivial bounds for small values of $X$ and conclude 
\begin{equation}\label{SX} S_X(a/q)=\frac{\phi(d)}{\phi(qd)}G(a,q)\Big(X-\chi(r_d)(dX)^{\rho}/d\rho\Big)+ O(Me^{-10K^2c_1\sqrt{\log N}})
\end{equation}
for all $X \leq M$. 

\noindent Now suppose $\alpha=a/q+\beta$ with $|\beta| < (q_0Q(\delta)^2)^K/L$. By (\ref{SX}) and two applications of integration by parts, we see  
\begin{align*}S_M(\alpha) &= \sum_{x=1}^M \nu(x)e^{2\pi i h_d(x)a/q}e^{2\pi i h_d(x)\beta} \\\\ &=S_M(a/q)e^{2\pi i h_d(M)\beta} - \int_{1}^{M} S_x(a/q)2\pi i\beta h_d'(x)e^{2\pi i h_d(M)\beta}\text{d}x \\\\&= S_M(a/q)e^{2\pi i h_d(M)\beta}-\frac{\phi(d)}{\phi(qd)}G(a,q) \int_{1}^{M}\Big(x-\chi(r_d)(dx)^{\rho}/d\rho\Big) 2\pi i\beta h_d'(x)e^{2\pi i h_d(x)\beta}\text{d}x\\\\ &\quad +O((1+\beta L)Me^{-10K^2c_1\sqrt{\log N}}) \\\\&=\frac{\phi(d)}{\phi(qd)}G(a,q)\int_{1}^M\Big(1-\chi(r_d)(dx)^{\rho-1}\Big)e^{2\pi i h_d(x)\beta}\text{d}x +O(Me^{-5K^2c_1\sqrt{\log N}}),\\
\end{align*}
and the asymptotic is established.\\
\qed

\subsection{Proof of Lemma \ref{gs}} Fix $g, W, b,a,q$ as in Lemma \ref{gs}. We primarily make use of the well-known complete Gauss sum estimate
\begin{equation}\label{fgs} \Big|\sum_{\ell=0}^{q-1} e^{2\pi i g(\ell) a/q}\Big| \ll \gcd(\textnormal{cont}(g),q)^{1/k}q^{1-1/k},
\end{equation}
which can be found for example in Lemma 6 of \cite{Lucier}. As is often the case with this type of sum, we can simplify our argument by taking advantage of multiplicativity. Specifically, it is not difficult to show that if $q=q_1q_2$ with $(q_1,q_2)=1$, then 
\begin{equation*}\sum_{\substack{\ell=0 \\ (W\ell+b,q)=1}}^{q-1} e^{2\pi i g(\ell) a/q}= \Big(\sum_{\substack{\ell_1=0 \\ (W\ell_1+b,q_1)=1}}^{q_1-1} e^{2\pi i g(\ell_1) a_1/q_1}\Big)\Big( \sum_{\substack{\ell_2=0 \\ (W\ell_2+b,q_2)=1}}^{q_2-1} e^{2\pi i g(\ell_2) a_2/q_2}\Big),
\end{equation*} where $a/q=a_1/q_1+a_2/q_2$, so we can assume without loss of generality that $q=p^{j}$ for some $p\in \P$, $j \in \N$, and separately consider the cases of $p \mid W$ and $p \nmid W$.

\noindent If $p \mid W$ and $p \mid b$, then $W\ell+b$ is never coprime to $p^{j}$, so the sum is clearly zero. If $p \mid W$ and $p \nmid b$, then $W\ell+b$ is always coprime to $p^{j}$, so the sum is complete and the result follows from (\ref{fgs}). 

\noindent If $p \nmid W$, then $p \mid W\ell+b$ if and only if $\ell\equiv -bW^{-1}$ mod $p$. Therefore, 
\begin{equation} \sum_{\substack{\ell=0 \\ p\nmid W\ell+b}}^{p^{j}-1} e^{2\pi i g(\ell) a/p^{j}}= \sum_{\ell=0}^{p^{j}-1} e^{2\pi i g(\ell) a/p^{j}}- \sum_{r=0}^{p^{j-1}-1} e^{2\pi i g(pr+m) a / p^{j}},
\end{equation} where $m \equiv -bW^{-1}$ mod $p$, and by (\ref{fgs}) we need only obtain the estimate for the second sum.

\noindent Setting 
\begin{equation*} \tilde{g}(r)=\frac{g(pr+m)-g(m)}{p},
\end{equation*}
we see that $\tilde{g}$ is a polynomial with integer coefficients and leading coefficient $a_kp^{k-1}$. In particular, 
\begin{equation*}\gcd(\text{cont}(\tilde{g}),p^{j-1}) \leq p^{k-1}\gcd(a_k,p^{j-1}).\end{equation*} Therefore, by (\ref{fgs}) we have
\begin{align*}\Big|\sum_{r=0}^{p^{j-1}-1} e^{2\pi i g(pr+m) a / p^{j}}\Big|&= \Big|\sum_{r=0}^{p^{j-1}-1}e^{2\pi i (g(pr+m)-g(m)) a / p^{j}}\Big| \\\\ &= \Big|\sum_{r=0}^{p^{j-1}-1}e^{2\pi i\tilde{g}(r)a/p^{j-1}} \Big| \\\\& \ll \Big(p^{k-1}\gcd(a_k,p^{j-1})\Big)^{1/k}p^{(j-1)(1-1/k)} \\\\&\leq \gcd(a_k,p^{j})^{1/k}p^{j(1-1/k)},
\end{align*}
as required.\\
\qed

\subsection{Proof of Corollary \ref{Gaq}} From its definition, we see that the leading coefficient of $h_d$ is $d^kb/\lambda(d)$, where $b$ is the leading coefficient of $h$. Given $q\in \N$ and $(a,q)=1$, we write $q=q_1q_2$, where $q_2$ is the maximal divisor of $q$ which is coprime to $d$. In particular, 
\begin{equation} \label{bk} \gcd(d^kb/\lambda(d),q_2) \leq b.
\end{equation}
Therefore, by Lemma \ref{gs}, Lemma \ref{content}, and (\ref{bk}) we have 
\begin{align*}|G(a,q)| = \Big|  \sum_{\substack{\ell=0 \\ (r_d+d\ell,q)=1}}^{q-1} e^{2\pi i h_d(\ell)a/q}\Big| &\ll \Big( \gcd(\text{cont}(h_d),q_1)b \Big)^{1/k}q^{1-1/k} \ll q^{1-1/k},
\end{align*}
and all the required estimates are established.\\
\qed

\section{Theorems \ref{pintQ} and \ref{rel}: An Informal Discussion} \label{C}
Using the result of Theorem \ref{pint2}, one can almost immediately conclude Theorem \ref{rel} by replicating the transference principle argument used in \cite{Le} to obtain Theorem F from a uniform version of Theorem C. Similarly, using weighted analogs of the major and minor arc estimates from this paper, one can almost immediately conclude Theorem \ref{pintQ} by reproducing the method of \cite{HLR} used to prove Theorem D. In each instance, there are a few issues that arise and we address in this section, which is best read in conjunction with those two papers.  First, we recall that for the arguments in \cite{HLR} and \cite{Le}, it is convenient, if not necessary, to do analysis with a discrete frequency domain, that is to embed subsets of $[1,N]$ into the finite group $\Z/N\Z$ as opposed to the integers. 

\subsection{Higher Moments of Weyl Sums.} \label{HM} To adapt the methods of \cite{HLR} and \cite{Le}, we need analogous estimates on higher moments of weighted and unweighted exponential sums over polynomials in primes. Specifically, if we borrow some notation from Section \ref{prelim} and define $$T(\alpha)=\frac{1}{\Psi_d}\sum_{x\in H_d}\nu_d(x)e^{2\pi i h_d(x)\alpha}\quad \text{and} \quad W(\alpha)= \frac{M_d}{\Psi_dN} \sum_{x\in H_d}\nu_d(x)h_d'(x)e^{2\pi i h_d(x)\alpha},$$ then it is straightforward to apply the major and minor arc estimates from this paper, weighted analogs thereof, and higher moment estimates on standard Weyl sums (see \cite{Wooley} for example) to conclude under appropriate conditions that
$$ \sum_{t\in \Z/N\Z} |T(t/N)|^s = N \int_0^1 |T(\alpha)|^s \textnormal{d}\alpha \ll 1 \quad \text{and} \quad  \sum_{t\in \Z/N\Z} |W(t/N)|^s = N \int_0^1 |W(\alpha)|^s \textnormal{d}\alpha \ll 1.$$ It is with these estimates in mind that we chose $s=2^k+6$, although something much smaller would suffice, and the above equalities follow from the dependence on $s$ in the definition of $H_d$, as the relevant mod $N$ congruences imply equality.

\subsection{Applying Lemma \ref{RS} to Theorem \ref{pintQ}.} Because the method of \cite{HLR} does not involve estimating the number of solutions to the desired equation, it suffices for the proof of Theorem \ref{pintQ} to use a simplified form of Lemma \ref{RS} in which $Q(\delta)$ is replaced with $e^{c_1\sqrt{\log N}}$ throughout. In order to obtain a usable analog to Lemma 1 of \cite{HLR}, we need to initially pass to a subprogression of step size $\lambda(q_0)$ and replace the condition $d\leq N^{.01}$ with $d \leq e^{c\sqrt{\log N}}$ for a sufficiently small constant $c$. This requires us to replace the $L^2$ concentration upper bound $\sigma^2(\log N)^{-1+\epsilon}$ with $\sigma^2(\log N)^{-\frac{1}{2}+\epsilon}$, which is the  reason for the factor of $2$ discrepancy between Theorem D and Theorem \ref{pintQ}.

\subsection{``Square Root Cancellation" in Theorem \ref{pintQ}} The proof of Theorem D intimately uses the fact that for a quadratic polynomial, the normalized, weighted Weyl sum has ``square root cancellation" on the major arcs. In our setting, we can apply weighted analogs of Lemmas \ref{asym} and \ref{asym2} to conclude under appropriate conditions that if $t/N$ is close to a rational $a/q$ with $(a,q)=1$, then $$W(t/N) \ll \frac{q^{1/2}}{\phi(q)}\min\{1,(N|t/N-a/q|)^{-1} \}  \ll q^{-1/2}\log\log q\min\{1,(N|t/N-a/q|)^{-1} \},$$ where $W$ is as in Section \ref{HM} and the last inequality is a standard estimate on $\phi$. While this is not quite as good as the estimate used in the proof of Lemma 2 of \cite{HLR}, the error of $\log\log q$ can easily be absorbed with other negligible terms as in the end of that proof  (in fact $\log q$ would be fine as well). For a more detailed proof of Theorem \ref{pintQ}, see \cite{thesis}.

\subsection{Rephrasing Theorem \ref{pint2} to Deduce Theorem \ref{rel}.} Theorem \ref{pint2} implies the following, less precise statement, which uses notation defined in Section \ref{prelim} and  is ready-made for applying a transference principle. 
\begin{theorem} \label{transready} If $h\in\Z[x]$ is a $\P$-intersective polynomial and $F:\Z/N\Z \to [0,1]$ with  
$$\frac{1}{N}\sum_{x\in \Z/N\Z} F(x) \geq \delta >0,$$ then there exist  constants $c(h,\delta)>0$ and $N_0(h,\delta)$ such that 
$$\frac{1}{NM_d}\sum_{\substack{x\in \Z/N\Z \\ y\in H_d}}F(x)F(x+h_d(y))\nu_d(y) \geq c(h,\delta)$$ provided $d\leq \log N$ and $N\geq N_0(h,\delta)$.
\end{theorem}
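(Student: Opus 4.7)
The plan is to reduce Theorem \ref{transready} to Theorem \ref{pint2} by passing from the bounded function $F$ to an honest level set via thresholding, then transferring the resulting lower bound back to $F$ using pointwise positivity. Concretely, I would identify $\Z/N\Z$ with $\{1,\dots,N\}$ and set
$$A=\{x\in[1,N]:F(x)\geq \delta/2\}.$$
Since $\sum_x F(x)\geq \delta N$ and $F\leq 1$, a one-line averaging argument gives $|A|\geq (\delta/2)N$, so $A\subseteq[1,N]$ has density at least $\delta/2$. The hypotheses of Theorem \ref{pint2} require $d\leq \max\{\log N,Q(\delta/2)\}$, which holds by the assumption $d\leq \log N$, and $\delta/2\geq C(\log N)^{-1/2(k+\epsilon-1)}$, which holds once $N\geq N_0(h,\delta)$ for a suitable threshold. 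Theorem \ref{pint2} applied to $A$ with density $\delta/2$ then yields
$$\R_d(A)\geq \exp\!\bigl(-C(\delta/2)^{-(k+\epsilon-1)}\bigr)NM_d.$$

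Next I would compare $\R_d(A)$ with its cyclic analog. Because $0<h_d(y)<N/s$ for every $y\in H_d$, any pair $(x,y)$ contributing to $\R_d(A)$ automatically has $x,x+h_d(y)\in A\subseteq[1,N]$, so $x+h_d(y)\bmod N = x+h_d(y)$, and the pair also contributes to the cyclic count
$$\R_d^{\Z/N\Z}(A):=\sum_{\substack{x\in\Z/N\Z\\ y\in H_d}}1_A(x)1_A(x+h_d(y))\nu_d(y).$$
Any wrap-around contributions to this cyclic sum are non-negative, so $\R_d^{\Z/N\Z}(A)\geq \R_d(A)$. Using the pointwise inequality $F\geq (\delta/2)1_A$ now gives
$$\sum_{\substack{x\in\Z/N\Z\\ y\in H_d}}F(x)F(x+h_d(y))\nu_d(y)\geq (\delta/2)^2\R_d^{\Z/N\Z}(A)\geq c(h,\delta)NM_d,$$
with $c(h,\delta)=(\delta^2/4)\exp(-C(\delta/2)^{-(k+\epsilon-1)})$, which is precisely the required estimate.

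There is no real obstacle here: all of the delicate work is encapsulated in Theorem \ref{pint2}, and both the thresholding step and the wrap-around comparison rely only on pointwise non-negativity of $F$. The only technical item is to choose $N_0(h,\delta)$ large enough that $\delta/2$ exceeds the density floor $C(\log N)^{-1/2(k+\epsilon-1)}$ demanded by Theorem \ref{pint2}; since this floor is an explicit decreasing function of $N$, such a choice of $N_0$ can be made to depend only on $h$ and $\delta$.
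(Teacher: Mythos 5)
Your proof is correct and supplies exactly the details the paper omits: in Appendix~\ref{C} the paper simply asserts that Theorem~\ref{pint2} ``implies'' Theorem~\ref{transready} without argument, and your thresholding of $F$ at level $\delta/2$, application of Theorem~\ref{pint2} at density $\delta/2$, observation that wrap-around terms in the cyclic count are non-negative so that $\R_d^{\Z/N\Z}(A)\geq \R_d(A)$, and pointwise bound $F\geq(\delta/2)1_A$ constitute the standard and correct way to make that reduction rigorous. The only thing worth flagging is that the constant in Theorem~\ref{pint2} depends on a fixed choice of $\epsilon$, so one should fix (say) $\epsilon=1$ at the outset to ensure $c(h,\delta)$ and $N_0(h,\delta)$ genuinely depend only on $h$ and $\delta$.
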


\noindent Once armed with Theorem \ref{transready} and the unweighted higher moment estimate from Section \ref{HM}, Theorem \ref{rel} follows in the identical fashion that Theorem F follows from a uniform version of Theorem C, as in \cite{Le}.

\end{document}